\tikzstyle{vertex}=[auto=left,circle,draw=black,fill=white, inner sep=1.5]
\newtheorem{theorem}{Theorem}[section]
\newtheorem{prop}[theorem]{Proposition}
\newtheorem{lemm}[theorem]{Lemma}
\newtheorem{ex}[theorem]{Example}
\newtheorem{prob}{Problem}[section] 
\title{Chromatic polynomials of signed book graphs}
\author{Deepak Sehrawat and Bikash Bhattacharjya\\
Department of Mathematics\\
Indian Institute of Technology Guwahati, India\\
deepakmath55555@iitg.ac.in ; b.bikash@iitg.ac.in}
\date{}
\begin{document}
\maketitle

\begin{center}{\textbf{Abstract}}\end{center}
For $m \geq 3$ and $n \geq 1$, the $m$-cycle \textit{book graph} $B(m,n)$ consists of $n$ copies of the cycle $C_m$ with one common edge. In this paper, we prove that (a) the number of  switching non-isomorphic signed $B(m,n)$ is $n+1$, and (b) the chromatic number of a signed $B(m,n)$ is either 2 or 3. We also obtain explicit formulas for the chromatic polynomials and the zero-free chromatic polynomials of switching non-isomorphic signed book graphs. 

\noindent{\textbf{Keywords}: signed graph; switching isomorphism; book graph; chromatic number; chromatic polynomial; zero-free chromatic polynomial. 

\noindent\textbf{Mathematics Subject Classifications:} 05C22; 05C15 

\section{Introduction}\label{intro}

A \textit{signed graph} $\Sigma=(G, \sigma)$ consists of a graph $G=(V,E)$ and a sign function $\sigma : E \rightarrow \{1,-1\}$. The graph $G$ is called the \textit{underlying} graph of $\Sigma=(G, \sigma)$. The set $\sigma^{-1}(-1)=\{e\in E(G)~|~\sigma(e)=-1 \}$ is called the \textit{signature} of $\Sigma$. That is, by the signature of a signed graph $\Sigma$ we mean the set of negative edges in $\Sigma$. For convenience, we call $\sigma$ itself a signature and write $|\sigma|=|\sigma^{-1}(-1)|$. 

A cycle $C$ in $\Sigma$ is called \textit{positive} if the product $\prod_{e\in E(C)} \sigma(e)=1$ and \textit{negative}, otherwise. A signed graph $\Sigma$ is said to be \textit{balanced} if every cycle of the graph is positive and \textit{unbalanced}, otherwise. The notion of signed graphs and balance was introduced by Harary in~\cite{Harary}.

\textit{Switching} $\Sigma$ by a vertex $v$ changes the signs of all the edges incident to $v$. We say that a signed graph $\Sigma_1$ is \textit{switching equivalent}, or simply \textit{equivalent}, to another signed graph $\Sigma_2$ if one can be obtained from the other by a sequence of switchings. If we switch $\Sigma$ by every vertex of a subset $X \subseteq V(\Sigma)$ then the resulting signed graph is denoted by $\Sigma^{X}$. It is easy to see that switching defines an equivalence relation on the set of all signed graphs with an underlying graph $G$. For a signed graph $\Sigma$, its switching equivalence class is denoted by $[\Sigma]$. As switching operation preserve the product of signs on each cycle, any property of signed graphs that depends only on the signs of the cycles is invariant for all signed graphs belonging to $[\Sigma]$.

One of the fundamental theorems in the theory of signed graphs is that the set of negative cycles uniquely determines the equivalence class to which a signed graph belongs. More precisely, we state the following theorem.

\begin{theorem}[\cite{Zaslavsky}]
\label{Signature}
Two signed graphs $\Sigma_1$ and $\Sigma_2$ are switching equivalent if and only if they have the same set of negative cycles.
\end{theorem}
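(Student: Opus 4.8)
The plan is to prove the two implications separately, with the forward direction being routine and the converse requiring a spanning-tree normalization argument.

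For the forward direction, I would fix a single vertex switch and track its effect on an arbitrary cycle $C$. Switching at a vertex $v$ negates exactly those edges of $C$ that are incident to $v$; since $C$ is a cycle, $v$ lies on either zero or two edges of $C$, so an even number of signs on $C$ are flipped and the product $\prod_{e \in E(C)}\sigma(e)$ is unchanged. Iterating over the vertices of the switching set shows that the sign of every cycle, and hence the entire set of negative cycles, is a switching invariant. This establishes the ``only if'' direction.

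For the converse, I would assume $\Sigma_1=(G,\sigma_1)$ and $\Sigma_2=(G,\sigma_2)$ share the same set of negative cycles and, reducing to the case that $G$ is connected (treating components independently), fix a spanning tree $T$ of $G$ rooted at a vertex $r$. The key normalization step is that any signed graph on $G$ is switching equivalent to one in which every edge of $T$ is positive. To produce the required switching set I would assign to each vertex $v$ a value $s_v \in \{1,-1\}$ by setting $s_r=1$ and, moving down the tree, $s_v = s_u\,\sigma(uv)$ for the parent edge $uv$; taking $X=\{v : s_v=-1\}$ makes every tree edge positive in $\Sigma^X$, since each tree edge then contributes $s_u\, s_v\, \sigma(uv)=1$. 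Applying this normalization to both $\Sigma_1$ and $\Sigma_2$ yields switching-equivalent copies $\Sigma_1'$ and $\Sigma_2'$ in which all tree edges are positive. I would then observe that, once the tree is all-positive, the sign of each non-tree edge $e=uv$ equals the sign of its fundamental cycle $C_e$ (the unique cycle formed by $e$ together with the $u$--$v$ path in $T$), because the tree-path edges each contribute $+1$. Since the sign of a cycle is switching invariant by the forward direction, and $\Sigma_1,\Sigma_2$ agree on all negative cycles, $C_e$ has the same sign in both, so $e$ receives the same sign in $\Sigma_1'$ and $\Sigma_2'$. As every edge is either a tree edge (positive in both) or a non-tree edge (equal sign in both), we conclude $\Sigma_1'=\Sigma_2'$, whence $\Sigma_1$ and $\Sigma_2$ are switching equivalent.

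The step I expect to be the main obstacle is the normalization: verifying that the vertex labeling $s_v$ is well defined (it is, since a tree has a unique path from the root, so there is no consistency obstruction) and that the resulting switch genuinely trivializes the tree. An equivalent route would be to form the product signature $\tau=\sigma_1\sigma_2$, note that every cycle is positive in $(G,\tau)$ so that $(G,\tau)$ is balanced, and invoke the characterization of balanced graphs as precisely those switching equivalent to the all-positive graph; however, the spanning-tree argument has the advantage of being self-contained.
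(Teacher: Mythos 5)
The paper contains no proof of this statement for you to be compared against: Theorem~\ref{Signature} is imported verbatim from Zaslavsky's \emph{Signed graphs} paper as cited background, and it is used later only through its forward direction (switching invariance of cycle signs, e.g.\ to distinguish the $n+1$ signatures in Theorem~\ref{theorem1}). Judged on its own, your proof is correct and complete. The parity argument for the forward direction (a vertex lies on zero or two edges of any given cycle) is exactly right, and the spanning-tree normalization in the converse is sound: the recursion $s_r=1$, $s_v=s_u\,\sigma(uv)$ is well defined because parents in a rooted tree are unique; switching on $X=\{v: s_v=-1\}$ makes every tree edge positive since the new sign of a tree edge $uv$ is $s_u s_v\,\sigma(uv)=1$; and once both signatures are supported on non-tree edges, agreement of the fundamental-cycle signs forces the two normalized signatures to coincide edge by edge. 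This is essentially the classical argument from the cited reference, and your alternative route---observing that the product signature $\sigma_1\sigma_2$ makes every cycle positive and invoking the characterization of balanced signed graphs---is Harary's balance theorem in disguise, so both roads are legitimate. One point worth making explicit in the context of this particular paper, which later works with digons such as $C_2^{-}$: if parallel edges are allowed, a spanning tree contains at most one edge from each parallel class, and the fundamental cycle of a second parallel edge is the digon itself, so your argument covers that case unchanged; a loop, similarly, is its own cycle and its sign is untouched by switching, so the statement remains correct in that generality as well.
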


Two signed graphs $\Sigma_1 = (G, \sigma_{1})$ and $\Sigma_2=(H, \sigma_{2})$ are said to be \textit{isomorphic}, denoted $\Sigma_{1} \cong \Sigma_{2}$, if there exists a graph isomorphism $\psi : V(G) \rightarrow V(H)$ which preserve the edge signs. The signed graphs $\Sigma_{1}$ and $\Sigma_{2}$ are said to be \textit{switching isomorphic}, denoted $\Sigma_{1} \sim \Sigma_{2}$, if $\Sigma_{1}$ is isomorphic to $\Sigma_{2}^{X}$ for some $X \subseteq V(\Sigma_2)$. Two signed graphs having the same underlying graph are called \textit{automorphic} to each other if they are isomorphic.

In~\cite{Zaslavsky2}, Zaslavsky initiated  the study of vertex colorings of signed graphs. A \textit{coloring} of a signed graph $\Sigma= (G, \sigma)$ in $2k+1$ \textit{signed colors} is a mapping $c: V(G) \rightarrow \{-k,\ldots,-1,0,1,\ldots,k\}$. Similarly, a \textit{zero-free coloring} of a signed graph $\Sigma= (G, \sigma)$ in $2k$ \textit{signed colors} is a mapping $c: V(G) \rightarrow \{-k,\ldots,-1,1,\ldots,k\}$. A coloring $c$ of $\Sigma$ is said to be \textit{proper} if $c(x) \neq \sigma(e) c(y)$ for every edge $e=xy$, where $\sigma(e)$ is the sign of the edge $e$. In other words, a coloring of a signed graph is proper if the colors of the vertices incident to a positive edge are not equal, while those incident to a negative edge are not equal in absolute values whenever they are of opposite signs. 

The \textit{chromatic polynomial} $\chi_{\Sigma}(\lambda)$ of a signed graph $\Sigma$ is the function whose value, for odd positive arguments $\lambda=2k+1$, equals the number of proper colorings of $\Sigma$ in $2k+1$ signed colors. The \textit{zero-free chromatic polynomial} $\chi^{b}_{\Sigma}(\lambda)$ of a signed graph $\Sigma$ is the function, where  $\chi^{b}_{\Sigma}(2k)$ counts the zero-free proper colorings in $2k$ signed colors. The chromatic polynomial of a graph $G$ is denoted by $\chi_{G}(\lambda)$. The \textit{chromatic number} $\chi(\Sigma)$ of $\Sigma$ is defined to be the minimum number of the set
\[\left\{2k+1~:~ \chi_{\Sigma}(2k+1)>0 \right\} \cup \left\{2r~:~ \chi_{\Sigma}^b(2r)>0 \right\}.\]
There is a stronger conclusion if $\Sigma$ is balanced. More precisely, we have the following lemma.

\begin{lemm}[\cite{Zaslavsky2}] \label{identical chro}
If $\Sigma = (G,\sigma)$ is balanced, then $\chi_{G}(\lambda) = \chi_{\Sigma}(\lambda) = \chi^{b}_{\Sigma}(\lambda)$.
\end{lemm}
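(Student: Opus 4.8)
The plan is to reduce to the all-positive case by switching and then observe that on the all-positive graph the signed notion of proper coloring collapses to the ordinary one. First I would establish the key invariance: switching a signed graph by a single vertex $v$ leaves both $\chi_{\Sigma}$ and $\chi^{b}_{\Sigma}$ unchanged. To see this, fix a palette (either $\{-k,\ldots,k\}$ or $\{-k,\ldots,-1,1,\ldots,k\}$), take any proper coloring $c$ of $\Sigma$, and define $c'$ by $c'(v) = -c(v)$ and $c'(u) = c(u)$ for $u \neq v$. Negation maps each palette to itself, so $c'$ uses the same colors. For an edge not meeting $v$ neither the sign nor the incident colors change, so properness persists; for an edge $e = vy$ the sign becomes $-\sigma(e)$ in $\Sigma^{\{v\}}$ while $c'(v) = -c(v)$, so the requirement $c'(v) \neq -\sigma(e)\,c'(y)$ is literally the requirement $c(v) \neq \sigma(e)\,c(y)$ that $c$ already meets. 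Hence $c \mapsto c'$ is an involution, and so a bijection, between the proper colorings of $\Sigma$ and those of $\Sigma^{\{v\}}$, in both the full and zero-free settings. Iterating over the vertices of an arbitrary switching set $X$ gives $\chi_{\Sigma} = \chi_{\Sigma^{X}}$ and $\chi^{b}_{\Sigma} = \chi^{b}_{\Sigma^{X}}$.

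Next I would invoke balance. The all-positive signed graph $(G,+)$ has every cycle positive, hence is balanced, so $\Sigma$ and $(G,+)$ share the same (empty) set of negative cycles. By Theorem~\ref{Signature} they are switching equivalent, and the previous paragraph then yields $\chi_{\Sigma} = \chi_{(G,+)}$ and $\chi^{b}_{\Sigma} = \chi^{b}_{(G,+)}$.

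It then remains to identify the signed chromatic polynomials of $(G,+)$ with the ordinary $\chi_{G}$. For $\lambda = 2k+1$, a proper coloring of $(G,+)$ is a map $c : V(G) \to \{-k,\ldots,k\}$ with $c(x) \neq \sigma(e)\,c(y) = c(y)$ for every edge $e = xy$, which is precisely a proper coloring of $G$ in those $2k+1$ colors; thus $\chi_{(G,+)}(2k+1) = \chi_{G}(2k+1)$ for all $k \geq 0$. Since these are polynomials agreeing at infinitely many points, $\chi_{(G,+)} = \chi_{G}$. The identical argument with the $2k$-element zero-free palette gives $\chi^{b}_{(G,+)}(2k) = \chi_{G}(2k)$ for all $k \geq 1$, whence $\chi^{b}_{(G,+)} = \chi_{G}$. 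Combining the three equalities yields $\chi_{G} = \chi_{\Sigma} = \chi^{b}_{\Sigma}$.

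I expect the only point needing real care to be the verification in the first paragraph that the sign flip on the edges at $v$ exactly cancels the sign flip $c(v) \mapsto -c(v)$ in the properness condition; once that bijection is established, the rest is bookkeeping together with the standard fact that two polynomials agreeing on infinitely many integers must coincide.
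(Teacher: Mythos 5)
Your proof is correct, but there is nothing internal to compare it against: the paper does not prove this lemma at all, importing it verbatim from Zaslavsky \cite{Zaslavsky2}. What you have written is a sound, self-contained reconstruction of the standard argument, assembled from exactly the ingredients the paper quotes elsewhere in Section~\ref{intro}: the single-vertex sign-flip bijection (which the paper sketches informally in the paragraph following the lemma, though only to deduce invariance of the chromatic number), Theorem~\ref{Signature} applied to $\Sigma$ and the all-positive signature $(G,+)$ --- both of which have the empty set of negative cycles --- and the collapse of the signed properness condition to $c(x)\neq c(y)$ when all edges are positive. Two points you handle with appropriate care are worth highlighting. First, the crux of switching invariance is the cancellation you verify: for an edge $e=vy$, the condition $-c(v)\neq -\sigma(e)c(y)$ in $\Sigma^{\{v\}}$ is equivalent to $c(v)\neq\sigma(e)c(y)$ in $\Sigma$, and since negation preserves both the $(2k+1)$-color and the $2k$-color palettes, the map is an involution and hence a bijection in both the full and zero-free settings. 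Second, because $\chi_{\Sigma}$ is defined only by its values at odd arguments and $\chi^{b}_{\Sigma}$ only at even arguments, passing from equality of counts to equality of polynomials genuinely requires the polynomiality theorem quoted from \cite{Zaslavsky2} together with the fact that two polynomials agreeing at infinitely many points coincide; you invoke precisely this, where a careless writeup might have glossed over it. No gaps.
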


Let the signed graphs $\Sigma$ and $\Sigma'$ be switching equivalent. It is clear that a coloring $c$ of $\Sigma$ is proper if and only if the coloring $c'$ of $\Sigma'$ is proper, where $c'$ is obtained from $c$ after negating the colors of the vertices by which $\Sigma$ is switched. Thus the chromatic number of a signed graph remains invariant under switching operation.

In~\cite{Zaslavsky2}, Zaslavsky proved that the chromatic number and the chromatic polynomials of a signed graph are invariant under switching operation. In perfect analogy to ordinary graph coloring theory, we have the following theorem.

\begin{theorem}[\cite{Zaslavsky2}]
If $\Sigma$ is a signed graph on $n$ vertices, then $\chi_{\Sigma}(\lambda)$ and $\chi_{\Sigma}^b(\lambda)$ are monic polynomial functions of $\lambda$ of degree $n$.
\end{theorem}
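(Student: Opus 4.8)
The plan is to compute the number of proper colorings directly by inclusion--exclusion over the edges at which a coloring fails to be proper, which will exhibit $\chi_{\Sigma}$ and $\chi_{\Sigma}^{b}$ as explicit polynomials and simultaneously pin down their leading terms. Fix $\lambda = 2k+1$ and write $N(\Sigma)$ for the number of proper colorings of $\Sigma=(G,\sigma)$ in the colors $\{-k,\dots,k\}$. For an edge $e=xy$ call a coloring \emph{bad at $e$} if $c(x)=\sigma(e)c(y)$, i.e.\ if it violates the properness condition on $e$. Running inclusion--exclusion over the set of edges at which a coloring is bad, I would write
\begin{equation*}
N(\Sigma) \;=\; \sum_{S \subseteq E(G)} (-1)^{|S|}\, M(S),
\end{equation*}
where $M(S)$ counts the colorings $c\colon V(G)\to\{-k,\dots,k\}$ satisfying $c(x)=\sigma(e)c(y)$ for every edge $e=xy\in S$, with no condition imposed on the edges outside $S$.

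The heart of the argument is the evaluation of $M(S)$, which I would carry out on the spanning subgraph $(V(G),S)$ one connected component at a time. Inside a component, fixing the color $c_0$ of a base vertex determines the color of every other vertex as $\pm c_0$ by propagating the relations $c(x)=\sigma(e)c(y)$ along a spanning tree; a non-tree edge then imposes no further condition precisely when the fundamental cycle it closes is positive, and forces $2c_0=0$ otherwise. Since the sign is multiplicative over the cycle space, ``all fundamental cycles positive'' is the same as balance, so a balanced component admits $\lambda$ colorings (one for each value of $c_0$), while an unbalanced component forces $c_0=0$ and hence admits exactly one coloring. Writing $b(S)$ for the number of balanced components of $(V(G),S)$, this yields $M(S)=\lambda^{b(S)}$ and therefore
\begin{equation*}
\chi_{\Sigma}(\lambda) \;=\; \sum_{S \subseteq E(G)} (-1)^{|S|}\, \lambda^{b(S)} .
\end{equation*}

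The right-hand side is visibly a polynomial in $\lambda$, and since it agrees with the coloring count $N(\Sigma)$ at every odd positive integer $\lambda=2k+1$ it must coincide with $\chi_{\Sigma}$ as a polynomial. To read off the degree I would note that $b(S)\le c(S)$, the number of connected components of $(V(G),S)$, that $c(S)=n$ forces $S=\varnothing$, and that every nonempty $S$ has $c(S)\le n-1$; hence the only contribution of $\lambda^{n}$ comes from $S=\varnothing$ with coefficient $+1$, so $\chi_{\Sigma}$ is monic of degree $n$. For the zero-free polynomial I would repeat the computation with the colors $\{-k,\dots,-1,1,\dots,k\}$ and $\lambda=2k$; the single change is that an unbalanced component now admits no coloring, because $2c_0=0$ has no nonzero solution, so every $S$ with an unbalanced component drops out and
\begin{equation*}
\chi_{\Sigma}^{b}(\lambda) \;=\; \sum_{\substack{S \subseteq E(G) \\ (V(G),S)\ \text{balanced}}} (-1)^{|S|}\, \lambda^{c(S)} ,
\end{equation*}
which is again monic of degree $n$ by the same $S=\varnothing$ bookkeeping.

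I expect the main obstacle to be the careful justification of the component analysis for $M(S)$, and in particular the balanced/unbalanced dichotomy together with the role of the color $0$ — this is exactly the feature that departs from the ordinary (Whitney) subgraph expansion. Making precise that propagation around a negative cycle forces the base color to satisfy $2c_0=0$, and that this equation has a single solution in the full color set $\{-k,\dots,k\}$ but none in the zero-free set, is where all the genuinely signed content lies; once that is settled, the degree computation and the passage from counting function to polynomial are routine.
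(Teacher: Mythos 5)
Your proposal is correct. Note that the paper itself gives no proof of this statement --- it is quoted as background (Theorem 1.3, citing Zaslavsky's \emph{Signed graph coloring}) --- and your argument, the inclusion--exclusion leading to the expansions $\chi_{\Sigma}(\lambda)=\sum_{S\subseteq E}(-1)^{|S|}\lambda^{b(S)}$ and $\chi^{b}_{\Sigma}(\lambda)=\sum_{S\ \mathrm{balanced}}(-1)^{|S|}\lambda^{c(S)}$, together with the observation that only $S=\varnothing$ contributes $\lambda^{n}$, is essentially Zaslavsky's original Whitney-type subgraph-expansion proof, including the correct treatment of the one subtle point (an unbalanced component forces the base color to satisfy $2c_{0}=0$, giving exactly one coloring in the signed case and none in the zero-free case).
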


Let $e$ be a positive edge in $\Sigma=(G,\sigma)$. The \textit{edge-contraction} $\Sigma /e$ is obtained by identifying the end vertices of $e$ and deleting $e$. We also have the signed analogue of edge deletion-contraction formula for the chromatic polynomial of simple graph.  

\begin{theorem}[\cite{Zaslavsky2}] \label{E-C}
Let $\Sigma$ be a signed graph and $e$ be a positive edge in $\Sigma$. Then  
$$\chi_{\Sigma}(\lambda) = \chi_{\Sigma \setminus e}(\lambda) - \chi_{\Sigma /e}(\lambda)~\text{and}~\chi_{\Sigma}^b(\lambda) = \chi_{\Sigma \setminus e}^b(\lambda) - \chi_{\Sigma /e}^b(\lambda).$$
\end{theorem}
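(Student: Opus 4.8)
The plan is to prove both identities by a signed analogue of the classical deletion–contraction counting argument: I would establish each as an identity of \emph{counting functions} at infinitely many integer arguments, and then invoke the preceding theorem (that $\chi_{\Sigma}$ and $\chi_{\Sigma}^{b}$ are polynomials) to upgrade it to an identity of polynomials. For the first identity I would fix an odd integer $\lambda=2k+1$ and work with the color set $\{-k,\ldots,-1,0,1,\ldots,k\}$; for the second I would fix an even integer $\lambda=2k$ and work with the zero-free color set $\{-k,\ldots,-1,1,\ldots,k\}$. Since two polynomials agreeing on infinitely many points coincide, it suffices to verify each numerical identity for every $\lambda$ of the relevant parity.

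Fix one such $\lambda$ and let $x,y$ be the endpoints of the positive edge $e$. The key observation is that among all proper colorings of $\Sigma\setminus e$, the deleted edge $e$ — being positive — would impose exactly the condition $c(x)\neq c(y)$. I would therefore partition the proper colorings of $\Sigma\setminus e$ into those with $c(x)\neq c(y)$ and those with $c(x)=c(y)$. By the definition of properness and the positivity of $e$, the first class is precisely the set of proper colorings of $\Sigma$, and hence is counted by $\chi_{\Sigma}(\lambda)$ (resp.\ $\chi_{\Sigma}^{b}(\lambda)$).

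For the second class I would set up the natural bijection with proper colorings of $\Sigma/e$. A proper coloring $c$ of $\Sigma\setminus e$ with $c(x)=c(y)=a$ descends to the coloring $\bar c$ of $\Sigma/e$ that assigns $a$ to the merged vertex and agrees with $c$ elsewhere; conversely any proper coloring of $\Sigma/e$ lifts by assigning its value on the merged vertex to both $x$ and $y$. The point to verify is that this correspondence preserves properness: every edge of $\Sigma/e$ is the image of an edge of $\Sigma\setminus e$ carrying the same sign and the same pair of color values, so each edge constraint of $\Sigma/e$ is literally one of the edge constraints of $\Sigma\setminus e$. Thus the second class is counted by $\chi_{\Sigma/e}(\lambda)$ (resp.\ $\chi_{\Sigma/e}^{b}(\lambda)$). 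Adding the two classes gives $\chi_{\Sigma\setminus e}(\lambda)=\chi_{\Sigma}(\lambda)+\chi_{\Sigma/e}(\lambda)$, and rearranging yields the claim; the identical computation with the zero-free color set gives the $\chi^{b}$ statement.

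The step needing the most care — and the only place the positivity hypothesis is used — is the bijection of the second class with the colorings of $\Sigma/e$. Contracting a positive edge merges two vertices that are forced to share a color in the degenerate case $c(x)=c(y)$, so an honest vertex identification is the correct operation; for a negative edge the degenerate case would be $c(x)=-c(y)$, an ``anti-identification'' that does not correspond to a plain merge, which is exactly why the formula is stated only for positive $e$. I would also remark that the argument is robust to any parallel edges or loops created by the contraction: a parallel positive edge becomes a positive loop, forbidding all colorings and matching the impossibility of $c(x)=c(y)$ along it, while a parallel negative edge becomes a negative loop, forbidding the color $0$ and matching the constraint $a\neq -a$. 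Hence the counting identity continues to hold verbatim.
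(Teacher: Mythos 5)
Your proposal addresses a statement that the paper itself does not prove: Theorem~\ref{E-C} is imported verbatim from Zaslavsky's \emph{Signed graph coloring} paper and cited without proof, so there is no in-paper argument to compare against. Judged on its own merits, your proof is correct and is essentially the classical argument (and the one underlying Zaslavsky's original treatment): establish the identity as a counting statement at every argument of the appropriate parity, then invoke polynomiality of $\chi_{\Sigma}$, $\chi_{\Sigma\setminus e}$, $\chi_{\Sigma/e}$ (and their zero-free counterparts) to conclude equality of polynomials, since polynomials agreeing at infinitely many points coincide. The two substantive steps are both handled correctly: the partition of proper colorings of $\Sigma\setminus e$ according to whether $c(x)=c(y)$, where positivity of $e$ guarantees that the class $c(x)\neq c(y)$ is exactly the set of proper colorings of $\Sigma$; and the bijection between the class $c(x)=c(y)$ and proper colorings of $\Sigma/e$, which works because every edge of $\Sigma/e$ comes from an edge of $\Sigma\setminus e$ with the same sign and the same pair of color values. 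You also correctly identify why the hypothesis that $e$ is positive is indispensable (for a negative edge the degenerate case is $c(x)=-c(y)$, which is not a vertex identification), and your check that contraction-created loops and parallel edges do not break the count --- a positive loop killing all colorings versus the constraint $c(x)\neq c(y)$, a negative loop forbidding $0$ versus the constraint $a\neq -a$ --- is exactly the detail most sketches omit; note that in the zero-free case the negative-loop constraint is vacuous, matching the fact that $a\neq -a$ holds automatically for $a\neq 0$. No gaps.
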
 

Chromatic polynomials of signed graphs has been less studied. However, in order to compute both the chromatic polynomials (\textit{i.e.,} the chromatic polynomial as well as the zero-free chromatic polynomial) of a signed graph, a remarkable work was done by Mathias Beck and his team. In~\cite{Beck}, they published a SAGE code which produces both the chromatic polynomials as output when a signed graph is given as input. Using that code they have presented explicit formulas for both kind of chromatic polynomials of six switching non-isomorphic signed Petersen graphs.  

The aim of this paper is to obtain formulas for the chromatic polynomials as well as the zero-free chromatic polynomials of switching non-isomorphic signed book graphs. For $m \geq 3$ and $n \geq 1$, the $m$-cycle \textit{book graph} $B(m,n)$ consists of $n$ copies of the cycle $C_m$ with one common edge. The copies of the cycle $C_m$ are called the \textit{pages} of $B(m,n)$. Let  $V(B(m,n)) = \{u,v\}\cup \{u_{j}^i~|~1 \leq i \leq n,~~ 1 \leq j \leq m-2 \}$, and let $uv$ be the common edge to the cycles $C_{m}^{i}$, where $C_{m}^{i} = uu_{1}^{i}u_{2}^{i}u_{3}^{i}...u_{m-3}^{i}u_{m-2}^{i}vu$, for $1 \leq i \leq n$. For example, the cycle $C_{4}^{1}$ in $B(4,3)$ is the cycle $uu_{1}^{1}u_{2}^{1}vu$, where the graph $B(4,3)$ is shown in Figure~\ref{BG1}.

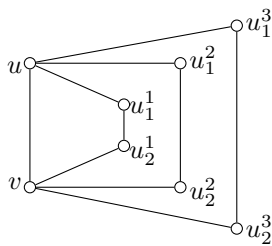
\begin{figure}[ht]
\centering
\begin{tikzpicture}[scale=0.5]
\node[vertex] (v1) at (9,2) {};
\node [below] at (8.6,2.5) {$v$};
\node[vertex] (v2) at (9,5.3) {};
\node [below] at (8.6,5.6) {$u$};
\node[vertex] (v3) at (11.5,3.1) {};
\node [below] at (12.0,3.6) {$u_{2}^{1}$};
\node[vertex] (v4) at (11.5,4.2) {};
\node [below] at (12.0,4.8) {$u_{1}^{1}$};
\node[vertex] (v5) at (13,2) {};
\node [below] at (13.6,2.5) {$u_{2}^{2}$};
\node[vertex] (v6) at (13,5.3) {};
\node [below] at (13.6,6) {$u_{1}^{2}$};
\node[vertex] (v7) at (14.5,0.9) {};
\node [below] at (15.1,1.5) {$u_{2}^{3}$};
\node[vertex] (v8) at (14.5,6.3) {};
\node [below] at (15.1,7) {$u_{1}^{3}$};
\foreach \from/\to in {v1/v2,v1/v3,v2/v4,v1/v5,v2/v6,v1/v7,v2/v8,v3/v4,v5/v6,v7/v8} \draw (\from) -- (\to);
\end{tikzpicture}
\caption{The book graph $B(4,3)$}
\label{BG1}
\end{figure}

It is clear that any permutation of the $n$ copies of the cycle $C_m$ or any permutation of $u$ and $v$ determines an automorphism of $B(m,n)$, and vice-versa. Thus an automorphism of $B(m,n)$ can only permute the vertices $u$ and $v$, and permute its $n$ pages.

In Section~\ref{book} of the paper, it is shown for given $m \geq 3$ and $n \geq 1$ that the number of switching non-isomorphic signed $B(m,n)$ is $n+1$. It is also shown that the chromatic number of a signed $B(m,n)$ is either 2 or 3. Finally, recursive formulas for the chromatic polynomials and the zero-free chromatic polynomials of signed book graphs are presented in Section~\ref{chrom-poly} and Section~\ref{balanced-poly}, respectively.   

\section{The chromatic number of signed book graphs}\label{book}

Note that every signed cycle is switching equivalent to a signed cycle whose signature is either empty or of size one. We will use this fact in the proof of the following theorem.

\begin{theorem}\label{prop1}
\label{theorem1}
Let $m\geq 3, n\geq1$ be fixed and $(B(m,n),\sigma)$ a signed book graph. Then $(B(m,n),\sigma)$ is equivalent to $(B(m,n),\tau)$, where $\tau\subseteq \{uu_{1}^{1},\ldots,uu_{1}^{n}\}$. Further, the number of switching non-isomorphic signed $B(m,n)$ is $n+1$.  
\end{theorem}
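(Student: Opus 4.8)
The plan is to first establish the normal form asserted in the first statement, and then to count orbits under the automorphism group. For the normal form I would exploit that the $m-2$ internal vertices $u_1^i,\ldots,u_{m-2}^i$ of each page are private to that page, so that switching any of them affects only the edges of page $i$ and leaves every other page untouched. Viewing page $i$ as the path $u,u_1^i,\ldots,u_{m-2}^i,v$ together with the common edge $uv$, I would switch the internal vertices in succession (say from $u_{m-2}^i$ back toward $u_1^i$) to make every edge of this path positive except possibly $uu_1^i$. A short backward induction shows this is always achievable, and that the resulting sign of $uu_1^i$ equals the parity of the number of originally negative edges along the path. Carrying this out independently for all $n$ pages leaves only the edges $uu_1^1,\ldots,uu_1^n$ and the common edge $uv$ possibly negative.

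To finish the reduction I would remove the sign on $uv$: if $uv$ is negative, switch by the single vertex $u$, which flips $uv$ and simultaneously flips each $uu_1^i$ (the only other edges incident to $u$), while fixing all internal edges. Since switching preserves the sign of every cycle, in either case the final sign of $uu_1^i$ must coincide with the sign of the page cycle $C_m^i$; hence the resulting signature is exactly $\tau=\{uu_1^i : C_m^i \text{ is negative}\}\subseteq\{uu_1^1,\ldots,uu_1^n\}$, which proves the first claim.

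For the count I would invoke Theorem~\ref{Signature}: since the $n$ page cycles $C_m^1,\ldots,C_m^n$ form a basis of the cycle space (a dimension count gives $|E|-|V|+1=n$), the switching equivalence class of a signed $B(m,n)$ is determined precisely by the subset $S\subseteq\{1,\ldots,n\}$ of indices $i$ for which $C_m^i$ is negative. Thus the $2^n$ normal forms $\tau$ from the first part represent $2^n$ distinct switching equivalence classes, in bijection with the subsets $S$. It then remains to pass from switching equivalence to switching isomorphism by quotienting out $\mathrm{Aut}(B(m,n))$, which by the remark preceding the theorem is generated by the $n!$ page permutations together with the transposition of $u$ and $v$.

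The decisive step is to pin down this automorphism action on the sets $S$. A page permutation $\pi$ clearly sends $S$ to $\pi(S)$, acting as the symmetric group on subsets, while the swap of $u$ and $v$ fixes each page cycle $C_m^i$ setwise and preserves edge signs, hence preserves the sign of every page and acts trivially on $S$. Consequently the switching isomorphism classes are exactly the orbits of $S_n$ acting on the subsets of $\{1,\ldots,n\}$ by permutation, and two subsets lie in the same orbit if and only if they have the same cardinality. Since that cardinality ranges over $0,1,\ldots,n$, there are exactly $n+1$ orbits, which is the desired count. I expect the only genuine subtlety to be verifying carefully that the $u$--$v$ swap acts trivially (so that no further collapsing occurs) and that distinct cardinalities really do give non--switching-isomorphic graphs; both follow from the fact that cycle signs are switching invariants that are preserved by automorphisms.
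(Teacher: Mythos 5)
Your proposal is correct and follows essentially the same route as the paper: the same switching reduction pushing all negative signs onto $uu_{1}^{1},\ldots,uu_{1}^{n}$ (clearing $uv$ by switching at $u$), page permutations to identify signatures of equal size, and the signs of the page cycles $C_{m}^{i}$ as the switching-isomorphism invariant separating different sizes. The only organizational difference is that you package the count as an orbit count of $S_n$ acting on the $2^n$ switching equivalence classes obtained from Theorem~\ref{Signature}, verifying explicitly that the $u$--$v$ swap acts trivially, whereas the paper argues directly that equal cardinalities give isomorphic signed graphs and unequal cardinalities give different numbers of negative $m$-cycles.
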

\begin{proof}
Let $(B(m,n),\sigma)$ be a signed book graph. By suitable switchings, if needed, we can make each negative edge of $B(m,n)$ incident to $u$. If the edge $uv$ is negative, switching $u$ will make it positive. Thus we get a signature $\tau$ equivalent to $\sigma$ such that $\tau\subseteq \{uu_{1}^{1},\ldots,uu_{1}^{n}\}$.

Note that if $\sigma,\tau\subseteq \{uu_{1}^{1},\ldots,uu_{1}^{n}\}$ and $|\sigma|=|\tau|$, then an one-one correspondence between $\sigma$ and $\tau$ determines an isomorphism between $(B(m,n),\sigma)$ and $(B(m,n),\tau)$. However, if $|\sigma|\neq |\tau|$, then $(B(m,n),\sigma)$ cannot be switching isomorphic to $(B(m,n),\tau)$ because the number of negative cycles $C_m$ are different.  Therefore, the number of switching non-isomorphic signed $B(m,n)$ is $n+1$
\end{proof}

Let $\sigma_0=\emptyset$. For each $1 \leq l \leq n$, let $\sigma_{l} = \{uu_{1}^{1}, uu_{1}^{2}, \cdots , uu_{1}^{l}\}$. By the preceding theorem, $\sigma_0,\sigma_1,\ldots, \sigma_n$ are switching non-isomorphic signatures of $B(m,n)$.

It is proved in~\cite[Proposition 2.4]{Macajova} that $\chi_{(C_m, \sigma)} \leq 3$, where $(C_m, \sigma)$ is a signed cycle. Therefore we have $\chi_{(B(m,1), \sigma)} \leq 3$. In the following theorem, we assume that $n \geq 2$.  

\begin{theorem}\label{chro no of book graph}
Let $m\geq 3,n\geq 2$ and $0\leq l\leq n$. Then
\[\chi(B(m,n), \sigma_{l})= \left\{ \begin{array}{ll}
2 & \textrm{if $m$ is odd and $l=n$,}\\
2 & \textrm{if $m$ is even and $l=0$,}\\
3 & \textrm{otherwise.} \end{array}\right. \]
\end{theorem}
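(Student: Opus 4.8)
The plan is to compute $\chi(B(m,n),\sigma_l)$ directly from its definition as the least number of (signed, or zero-free signed) colors admitting a proper coloring, by separately settling the thresholds $1$, $2$, and $3$. First I would record the cycle structure: $B(m,n)$ is the graph in which $u$ and $v$ are joined by the edge $uv$ (length $1$) and by the $n$ internally disjoint paths $u u_1^i \cdots u_{m-2}^i v$ of length $m-1$, so every cycle is either a single page $C_m^i$ (length $m$) or a two-page cycle $C_{ij}$ formed from the outer paths of pages $i$ and $j$ (length $2m-2$). Under $\sigma_l$ the edge $uv$ is positive, page $i$ is a negative cycle for $i\le l$ and positive for $i>l$, and $C_{ij}$ is negative exactly when precisely one of $i,j$ lies in $\{1,\dots,l\}$. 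Since $uv$ is positive, the single color $0$ never gives a proper coloring, so $\chi_{(B(m,n),\sigma_l)}(1)=0$ and hence $\chi\ge 2$ in every case.

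Next I would characterize when $\chi=2$, i.e.\ when a proper zero-free $2$-coloring $c:V\to\{-1,1\}$ exists. For such colors the condition $c(x)\ne\sigma(e)c(y)$ says that a positive edge has endpoints of opposite color and a negative edge endpoints of equal color; equivalently $c(x)c(y)=-\sigma(e)$ on every edge $e=xy$. Multiplying around any cycle $C$ yields the necessary condition $\sigma(C)=(-1)^{|C|}$, i.e.\ that $-\sigma$ be balanced, and the classical balance criterion (a spanning-tree construction) shows this is also sufficient. I would then test this against the two cycle types. The two-page cycles have even length $2m-2$, so all of them must be positive, which fails as soon as $0<l<n$ (a mixed cycle is negative). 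The single pages of length $m$ force $\sigma(C_m^i)=(-1)^m$, which fails for $l=0$ with $m$ odd and for $l=n$ with $m$ even. Checking the two surviving configurations confirms the criterion holds precisely when ($l=0$ and $m$ even) or ($l=n$ and $m$ odd), giving $\chi=2$ there; in all remaining (``otherwise'') cases one of these cycles exhibits a violation, so no zero-free $2$-coloring exists and $\chi\ge 3$.

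It remains to show $\chi\le 3$ always, i.e.\ to exhibit a proper coloring in $\{-1,0,1\}$. Fix $c(u)$ and $c(v)$ to be distinct (respecting the positive edge $uv$) and color each page independently, since pages meet only at $u$ and $v$. On a positive page the whole $u$–$v$ path carries only ``different-color'' constraints and has length $m-1\ge 2$; on a negative page the first edge $uu_1^i$ imposes $c(u_1^i)\ne -c(u)$ and the remaining path of length $m-2$ again has only difference constraints. Using that with three colors any path of length $\ge 2$ can be properly colored for arbitrary prescribed endpoint colors (and treating $m=3$ by noting the single constrained interior vertex has at most two forbidden colors), every page extends, so a proper $3$-coloring exists. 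Combined with $\chi\ge 3$ in the complementary cases this gives $\chi=3$ there, completing the trichotomy.

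The main obstacle I anticipate is the bookkeeping in the second step: correctly pinning down the signs of all single-page and two-page cycles and matching the parity identity $\sigma(C)=(-1)^{|C|}$ to the stated case split, rather than any single hard estimate. The $3$-coloring construction is routine once one observes that three colors make every sufficiently long path freely extendable, and the bound $\chi\ge 2$ is immediate from the positive common edge $uv$.
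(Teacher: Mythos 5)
Your proof is correct, but it takes a genuinely different route from the paper's. The paper argues case by case: it exhibits explicit proper $2$-colorings for $(B(2k+1,n),\sigma_n)$ and $(B(2k,n),\sigma_0)$, gets the lower bound $\chi\ge 3$ in the remaining cases by citing that a positive odd cycle (respectively a negative even cycle) needs three colors, and then writes down explicit $3$-colorings. You instead derive a uniform criterion: with colors in $\{-1,1\}$ the properness condition is equivalent to $c(x)c(y)=-\sigma(e)$ on every edge, so a zero-free $2$-coloring exists precisely when $-\sigma$ is balanced, i.e. $\sigma(C)=(-1)^{|C|}$ on every cycle; since every cycle of $B(m,n)$ is either a page (length $m$) or a two-page cycle (even length $2m-2$), checking this parity condition settles the dichotomy $\chi=2$ versus $\chi\ge 3$ in all cases at once --- the paper's two cited facts about positive odd and negative even cycles are special cases of your criterion. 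Your upper bound $\chi\le 3$ is likewise uniform (greedy extension along each page with three colors, pages being independent once $u$ and $v$ are colored) where the paper gives hand-tailored assignments, and you make explicit the step $\chi\ge 2$ (forced by the positive edge $uv$) that the paper leaves implicit. What the paper's method buys is concreteness: the colorings are displayed and checkable by eye. What yours buys is a reusable characterization (a signed graph with a positive edge has chromatic number $2$ iff its negation is balanced) and a case analysis that cannot accidentally miss a cycle, at the cost of invoking the classical balance criterion of Harary for the sufficiency direction, which you correctly identify but do not reprove.
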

\begin{proof}  In $(B(2k+1,n), \sigma_{n})$, assign colors $1$ and $-1$ to $u$ and $v$, respectively. For each $1 \leq r \leq n$,  assign colors $1,-1,1,...,-1,1$ to the vertices $u_{1}^{r},u_{2}^{r},u_{3}^{r},...,u_{2k-2}^{r},u_{2k-1}^{r}$, respectively. This gives a proper 2-coloring of $(B(2k+1,n), \sigma_{n})$. 

In $(B(2k,n), \sigma_{0})$, assign the colors $1$ and $-1$ to the vertices $u$ and $v$, respectively. For each $1 \leq r \leq n$, the vertices $u_{1}^{r},u_{2}^{r},u_{3}^{r},u_{4}^{r},...,u_{2k-3}^{r},u_{2k-2}^{r}$ are colored with $-1,1,-1,1,...,-1,1$, respectively. This gives a proper 2-coloring of $(B(2k,n), \sigma_{0})$. 

Note that the odd cycle $C_{2k+1}^{n} = uu_{1}^{n}u_{2}^{n}u_{3}^{n}...u_{2k-2}^{n}u_{2k-1}^{n}vu$ is positive in $(B(2k+1,n), \sigma_{l})$, where $0 \leq l \leq n-1$. It is well known that at least three colors are needed to properly color the vertices of a positive odd cycle. Therefore, $\chi(B(2k+1,n), \sigma_l) \geq 3$. We now give a proper 3-coloring of $(B(2k+1,n), \sigma_l)$. Assign colors 0 and $-1$ to $u$ and $v$, respectively. For each $1 \leq r \leq n$, the vertices $u_{1}^{r},u_{2}^{r},u_{3}^{r},...,u_{2k-2}^{r},u_{2k-1}^{r}$ are colored with $1,-1,1,...,-1,1$, respectively. This assignment is a proper 3-coloring of $(B(2k+1,n), \sigma_l)$, where $0 \leq l \leq n-1$. 

Finally, $(B(2k,n), \sigma_{l})$ contains a negative $2k$-cycle for each $1 \leq l \leq n$. The fact that an unbalanced even cycle needs at least 3 colors to have a proper coloring implies that $\chi(B(2k,n), \sigma_{l}) \geq 3$. To complete the proof it suffices to give a proper 3-coloring of $(B(2k,n), \sigma_{l})$. Assign colors 0 and $1$ to $u$ and $v$, respectively. For each $1 \leq r \leq n$, the vertices $u_{1}^{r},u_{2}^{r},u_{3}^{r},...,u_{2k-2}^{r},u_{2k-1}^{r}$ are colored with $1,-1,1,...,1,-1$, respectively. This assignment is a proper 3-coloring of $(B(2k,n), \sigma_{l})$. 
\end{proof}

As the chromatic number of a signed graph is invariant under switching operation, by Theorem~\ref{chro no of book graph}, we conclude that the chromatic number of a signed book graph is either 2 or 3.


\section{Chromatic polynomials of signed book graphs}\label{chrom-poly}

For our convenience, we write $B_{l}(m,n)$ to denote the signed book graph $(B(m,n), \sigma_l)$. Also, we write $B^{uv}(m,n)$ to denote $(B(m,n), \{uv\})$. It is clear that $B_{n}(m,n) \sim B^{uv}(m,n)$. Since the chromatic polynomials of a signed graph are switching invariant, it is sufficient to the determine the chromatic polynomials of $B^{uv}(m,n)$ and $B_l(m,n)$ for $0 \leq l \leq n-1$. 

Let an unbalanced cycle on $n$ vertices be denoted by $C_n^-$. For instance, $C_2^{-}$ is shown in Figure~\ref{C_2}. 

\begin{figure}[ht]
\centering
\begin{tikzpicture}
\node[vertex] (v1) at (0,0) {};
\node [below] at (-0.1,-0.01) {u};
\node[vertex] (v2) at (2,0) {};
\node [below] at (2.1,-0.01) {v};
\draw [dashed] (2,0) to[out=-120,in=-60] node [sloped,above] { } (0,0);
\draw (2,0) to[out=120,in=60] node [sloped,above] { } (0,0);
\end{tikzpicture} 
\caption{An unbalanced cycle of length two} \label{C_2}
\end{figure}
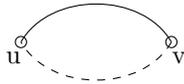

\begin{ex}\label{ex2}
\rm{Let the colors $-k,\ldots,-1,0,1\ldots,k$ be available. The number of proper colorings of $C_2^{-}$ with these $2k+1$ colors, in which one of $u$ or $v$ is colored with 0, is  $4k$. Else, the number of proper colorings of $C_2^{-}$ is $2k(2k-2)$. Thus $\chi_{C_{2}^{-}}(2k+1)  =4k+2k(2k-2) =(2k)^2$. }
\end{ex}
From Example~\ref{ex2}, we find that $\chi_{C_{2}^{-}}(\lambda) =(\lambda - 1)^2$. In~\cite{Beck}, it is proved that $\chi_{C_{3}^{-}}(\lambda) = (\lambda - 1)^3$. Now we give the formula of $\chi_{C_{n}^{-}}(\lambda)$ for all $n$.

\begin{lemm} \label{C-}
Let $C_{n}^{-}$ be an unbalanced cycle, where $n \geq 2$. Then $\chi_{C_{n}^{-}}(\lambda) = (\lambda-1)^{n}$. 
\end{lemm}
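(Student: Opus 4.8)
The plan is to prove the identity by induction on $n$, using the signed deletion--contraction formula of Theorem~\ref{E-C}. First I would invoke the observation recorded just before Theorem~\ref{theorem1}: since $C_n^-$ is unbalanced, it is switching equivalent to a cycle carrying exactly one negative edge, and because the chromatic polynomial is switching invariant I may assume without loss of generality that $C_n^-$ has vertices $v_1 v_2 \cdots v_n v_1$ with $v_n v_1$ its unique negative edge and all of its remaining edges positive. The base case $n=2$ is already supplied by Example~\ref{ex2}, which gives $\chi_{C_2^-}(\lambda) = (\lambda-1)^2$.

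For the inductive step, with $n \geq 3$, I would apply Theorem~\ref{E-C} to a positive edge, say $e = v_1 v_2$. Deleting $e$ leaves a path on $n$ vertices; being a tree, this signed path has no cycles and is therefore balanced, so Lemma~\ref{identical chro} lets me replace its chromatic polynomial by that of the underlying path, $\chi_{C_n^- \setminus e}(\lambda) = \chi_{P_n}(\lambda) = \lambda(\lambda-1)^{n-1}$. Contracting $e$ identifies $v_1$ and $v_2$ and yields a cycle on $n-1$ vertices; since $e$ is positive, contraction leaves the product of signs around the cycle unchanged, so the contracted cycle is again unbalanced, i.e. it is $C_{n-1}^-$. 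The deletion--contraction formula therefore gives
\[
\chi_{C_n^-}(\lambda) = \lambda(\lambda-1)^{n-1} - \chi_{C_{n-1}^-}(\lambda).
\]
Substituting the inductive hypothesis $\chi_{C_{n-1}^-}(\lambda) = (\lambda-1)^{n-1}$ and factoring out $(\lambda-1)^{n-1}$ yields $(\lambda-1)^n$, which closes the induction.

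The only genuinely delicate points are the two structural claims about the one-term reductions: that deleting a positive edge of $C_n^-$ produces a balanced graph, so that Lemma~\ref{identical chro} applies and the ordinary path polynomial $\lambda(\lambda-1)^{n-1}$ may be used; and that contracting a positive edge preserves the unbalanced status, sending $C_n^-$ to $C_{n-1}^-$ rather than to a balanced $(n-1)$-cycle. Both facts stem from a positive edge contributing a factor $1$ to the sign product of every cycle through it, but I expect these to be the main things to state carefully, since the deletion--contraction identity of Theorem~\ref{E-C} is valid only at a positive edge, and its correct use here hinges on tracking the sign of the surviving cycle produced by the contraction.
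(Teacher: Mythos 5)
Your proposal is correct and follows essentially the same route as the paper: induction on $n$ with Example~\ref{ex2} as the base case, then deletion--contraction (Theorem~\ref{E-C}) at a positive edge, identifying the deletion as the path $P_n$ and the contraction as $C_{n-1}^-$. Your extra care in normalizing the signature by switching (guaranteeing a positive edge exists) and in verifying that the contracted cycle stays unbalanced makes explicit what the paper leaves implicit, but the argument is the same one.
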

\begin{proof}
We prove this lemma by induction on $n$. If $n = 2$, the result is true by Example~\ref{ex2}. Let us assume that the result holds for all $n \leq r-1$, where $r \geq 3$. We shall prove that the result is also true for $n = r$. If $e$ is a positive edge of $C_{r}^{-}$ then by edge deletion-contraction formula, we get 
\begin{equation} \label{eq1}
\chi_{C_{r}^{-}}(\lambda) = \chi_{P_{r}}(\lambda) - \chi_{C_{r-1}^{-}}(\lambda).
\end{equation}
It is well known that $\chi_{P_r}(\lambda) = \lambda(\lambda - 1)^{r-1}$, and by induction hypothesis we have $\chi_{C_{r-1}^{-}}(\lambda) = (\lambda-1)^{r-1}$. Therefore Equation~(\ref{eq1}) gives
\begin{equation*}
 \chi_{C_{r}^{-}}(\lambda) = \lambda(\lambda - 1)^{r-1} - (\lambda - 1)^{r-1} = (\lambda - 1)^{r}.
\end{equation*}
Thus the proof follows by induction.
\end{proof}

It is well known that $\chi_{C_m}(\lambda) = (\lambda-1)^{m} + (-1)^{m}(\lambda-1)$. Using the function $\chi_{C_m}(\lambda)$, a polynomial function $\gamma_m$ of degree $m-2$ is defined as 
\begin{equation} \label{eq of gamma_m}
\gamma_m = \frac{\chi_{C_m}(\lambda)}{\lambda(\lambda-1)} = \frac{(\lambda-1)^{m-1} - (-1)^{m-1}}{\lambda}.
\end{equation}

The function $\gamma_m$ can also be expressed as the finite geometric series. More precisely, we have 

\begin{equation}
 \gamma_m = \frac{(\lambda - 1)^{m-1}+(-1)^m}{\lambda} =  \sum_{i=0}^{m-2} (-1)^{i} (\lambda - 1)^{(m-2)-i}
\end{equation}

We will see that the function $\gamma_m$ is very much useful for obtaining the explicit formulas of both kind of chromatic polynomials of signed book graphs. 

Let $\Sigma$ be a given signed graph. Construct the signed graph $\Sigma_{t+1}$ by attaching the path \linebreak[4] $P_{t+1}:=uu_1u_2\ldots u_t$ to a vertex $u$ of $\Sigma$. If the chromatic polynomial of $\Sigma$ is known then we can compute the chromatic polynomial of $\Sigma_{t+1}$ using the following lemma.

\begin{lemm}\label{Path_add}
Let $\Sigma$ be a signed graph. Then $\chi_{\Sigma_{t+1}}(\lambda) = (\lambda -1)^{t} \chi_{\Sigma}(\lambda)$.
\end{lemm}
\begin{proof}
We prove this lemma by induction on $t$. Note that $\chi_{P_{t+1}}(\lambda) =\lambda(\lambda - 1)^{t}$. Let $t = 1$, and $e_1 = uu_{1}$ be attached to a vertex $u$ of $\Sigma$. Using edge deletion-contraction formula on $e_1$, we get 
\begin{equation*}
\chi_{\Sigma_{2}}(\lambda) = \lambda \chi_{\Sigma}(\lambda) - \chi_{\Sigma}(\lambda) = (\lambda - 1)  \chi_{\Sigma}(\lambda).
\end{equation*}
Now assume that the result holds for $t = r-1$, that is, $\chi_{\Sigma_{r}}(\lambda) = (\lambda - 1)^{r-1}  \chi_{\Sigma}(\lambda)$, where $r \geq 3$. Now let $t = r$ and let $e = uu_{1}$. Using edge deletion-contraction formula on the edge $e$ of $\Sigma_{r+1}$, we get $\chi_{\Sigma_{r+1}}(\lambda) = \chi_{P_{r} \cup \Sigma}(\lambda) - \chi_{\Sigma_{r}}(\lambda)$, where $P_r$ and $\Sigma$ are disjoint. Thus we have
\begin{align*}
\chi_{\Sigma_{r+1}}(\lambda) & = \lambda(\lambda - 1)^{r-1}  \chi_{\Sigma}(\lambda) - \chi_{\Sigma_{r}}(\lambda) \\
 & = \lambda(\lambda - 1)^{r-1}  \chi_{\Sigma}(\lambda) - (\lambda - 1)^{r-1}  \chi_{\Sigma}(\lambda) \\
 & = (\lambda - 1)^{r}  \chi_{\Sigma}(\lambda).
\end{align*}
Hence the proof follows by induction. 
\end{proof}

Replace the edge $uv$ of $B(m,n)$ by an unbalanced cycle of length two, and denote the graph so obtained by $B_{m}^{n}$. For example, $B_{4}^{3}$ and $B_{m}^{1}$ are shown in Figure~\ref{Figure5}. As a convention, write $B_{2}^{1} = C_{2}^{-}$ so that $\chi_{B_{2}^{1}}(\lambda) = (\lambda-1)^{2}$.

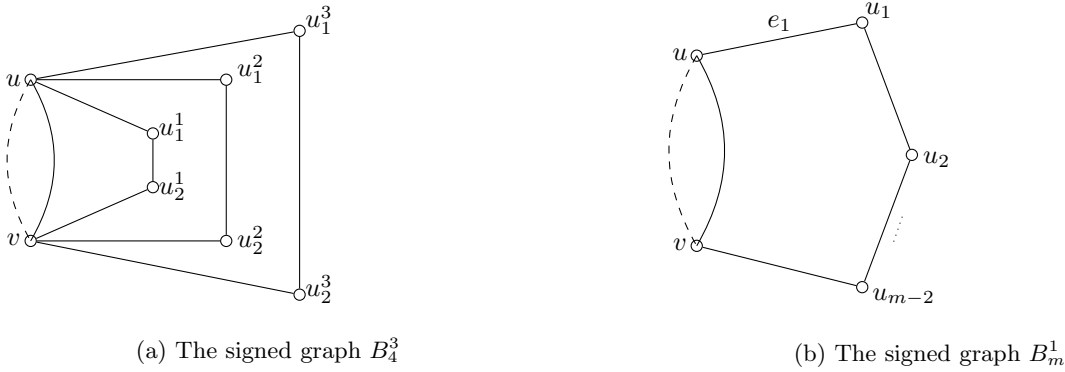
\begin{figure}[ht]
\centering
\begin{subfigure}{0.45\textwidth}
\begin{tikzpicture}[scale=0.65]
\node[vertex] (v1) at (9,2) {};
\node [below] at (8.65,2.35) {$v$};
\node[vertex] (v2) at (9,5.3) {};
\node [below] at (8.65,5.6) {$u$};
\node[vertex] (v3) at (11.5,3.1) {};
\node [below] at (11.9,3.6) {$u_{2}^{1}$};
\node[vertex] (v4) at (11.5,4.2) {};
\node [below] at (11.9,4.8) {$u_{1}^{1}$};
\node[vertex] (v5) at (13,2) {};
\node [below] at (13.5,2.5) {$u_{2}^{2}$};
\node[vertex] (v6) at (13,5.3) {};
\node [below] at (13.5,6) {$u_{1}^{2}$};
\node[vertex] (v7) at (14.5,0.9) {};
\node [below] at (14.9,1.5) {$u_{2}^{3}$};
\node[vertex] (v8) at (14.5,6.3) {};
\node [below] at (14.9,7) {$u_{1}^{3}$};

\foreach \from/\to in {v1/v3,v2/v4,v1/v5,v2/v6,v1/v7,v2/v8,v3/v4,v5/v6,v7/v8} \draw (\from) -- (\to);

\draw [dashed] (9,5.3) to[out=-120,in=120] node [sloped,above] { } (9,2);
\draw (9,5.3) to[out=-60,in=60] node [sloped,above] { } (9,2);

\end{tikzpicture}
\caption{The signed graph $B_{4}^{3}$}\label{unbal_2}
\end{subfigure}
\hfill
\begin{subfigure}{0.45\textwidth}
\begin{tikzpicture}[scale=1.1]
\node[vertex] (v1) at (9,2) {};
\node [below] at (8.8,2.2) {$v$};
\node[vertex] (v2) at (9,4.3) {};
\node [below] at (8.8,4.5) {$u$};
\node[vertex] (v7) at (11,1.5) {};
\node [below] at (11.5,1.6) {$u_{m-2}$};
\node[vertex] (v8) at (11,4.7) {};
\node [below] at (11.2,5.05) {$u_{1}$};
\node[vertex] (v9) at (11.6,3.1) {};
\node [below] at (11.9,3.25) {$u_{2}$};
\node [below] at (10.0,4.9) {$e_1$};
\draw[dotted](11.38,2.05) -- (11.5,2.40);

\foreach \from/\to in {v1/v7,v2/v8,v7/v9,v8/v9} \draw (\from) -- (\to);
\draw [dashed] (9,4.3) to [out=-120,in=120] node [sloped,above] { } (9,2);
\draw (9,4.3) to[out=-60,in=60] node [sloped,above] { } (9,2);

\end{tikzpicture}
\caption{The signed graph $B_{m}^{1}$} \label{figure6.1}
\end{subfigure}
\caption{The signed graphs $B_{4}^{3}$ and $B_{m}^{1}$}\label{Figure5}
\end{figure}

\begin{lemm}\label{B_m}
For $m \geq 2$, the chromatic polynomial of $B_{m}^{1}$ is given by 
$$\chi_{B_{m}^{1}}(\lambda) = (\lambda - 1)^{2} \gamma_{m}.$$
\end{lemm}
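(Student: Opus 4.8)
The plan is to prove the formula by induction on $m$, using the deletion-contraction identity (Theorem~\ref{E-C}) on a single positive edge of the connecting path; this expresses $\chi_{B_m^1}$ in terms of $\chi_{B_{m-1}^1}$, after which a short algebraic identity for $\gamma_m$ finishes the job. The base case $m=2$ holds by the stated convention: $\chi_{B_2^1}(\lambda)=(\lambda-1)^2$, and since $\gamma_2=\tfrac{(\lambda-1)+(-1)^2}{\lambda}=1$ we have $(\lambda-1)^2\gamma_2=(\lambda-1)^2$.

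For the inductive step, fix $m\geq 3$ and apply Theorem~\ref{E-C} to the positive path-edge $e_1=uu_1$ (labelled in Figure~\ref{figure6.1}), so that $\chi_{B_m^1}(\lambda)=\chi_{B_m^1\setminus e_1}(\lambda)-\chi_{B_m^1/e_1}(\lambda)$. I would evaluate the two terms separately. Deleting $e_1$ detaches $u_1$ from $u$ and leaves the unbalanced $2$-cycle $C_2^-$ on $\{u,v\}$ with a path $v\,u_{m-2}\,u_{m-3}\cdots u_1$ of $m-2$ new vertices pendant at $v$; hence by Lemma~\ref{Path_add} together with $\chi_{C_2^-}(\lambda)=(\lambda-1)^2$ (Example~\ref{ex2}) one obtains $\chi_{B_m^1\setminus e_1}(\lambda)=(\lambda-1)^{m-2}(\lambda-1)^2=(\lambda-1)^m$.

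The crucial observation is that contracting $e_1$ identifies $u$ and $u_1$, leaves the unbalanced $2$-cycle intact, and shortens the connecting path by exactly one vertex, so the resulting signed graph is (for $m\geq 4$) isomorphic to $B_{m-1}^1$. Thus $\chi_{B_m^1/e_1}(\lambda)=\chi_{B_{m-1}^1}(\lambda)=(\lambda-1)^2\gamma_{m-1}$ by the induction hypothesis, giving $\chi_{B_m^1}(\lambda)=(\lambda-1)^m-(\lambda-1)^2\gamma_{m-1}$. It then remains to reconcile this with $(\lambda-1)^2\gamma_m$, i.e.\ to verify $\gamma_m+\gamma_{m-1}=(\lambda-1)^{m-2}$. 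This is routine: using the closed forms $\gamma_m=\frac{(\lambda-1)^{m-1}+(-1)^m}{\lambda}$ and $\gamma_{m-1}=\frac{(\lambda-1)^{m-2}+(-1)^{m-1}}{\lambda}$, the constant terms cancel while the numerators combine to $\lambda(\lambda-1)^{m-2}$ (equivalently, the two alternating geometric series telescope).

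I expect the deletion term and the final algebra to be entirely mechanical, so the only genuine care is needed in the contraction step: verifying the vertex and edge counts so that $B_m^1/e_1$ really reproduces the same family with $m$ lowered by one, and confirming the $2$-cycle stays unbalanced. The one subtlety to flag is the degenerate case $m=3$, where contracting $e_1$ collapses the path to a single edge parallel to the $2$-cycle; this creates a redundant positive edge between $u$ and $v$, but it imposes no new constraint, so $\chi_{B_3^1/e_1}(\lambda)=\chi_{C_2^-}(\lambda)=\chi_{B_2^1}(\lambda)$ and the recursion $\chi_{B_m^1}(\lambda)=(\lambda-1)^m-\chi_{B_{m-1}^1}(\lambda)$ remains valid for all $m\geq 3$.
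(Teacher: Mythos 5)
Your proof is correct and takes essentially the same route as the paper: induction on $m$, deletion--contraction on the path edge $e_1=uu_1$, evaluating the deletion term as $(\lambda-1)^{m-2}\chi_{C_2^-}(\lambda)=(\lambda-1)^m$ via Lemma~\ref{Path_add} and Example~\ref{ex2}, and identifying the contraction with $B_{m-1}^1$ so the induction hypothesis applies, followed by the same algebraic simplification (your identity $\gamma_m+\gamma_{m-1}=(\lambda-1)^{m-2}$ is exactly the computation the paper carries out with the closed forms). Your explicit check of the degenerate case $m=3$, where contraction creates a redundant positive edge parallel to the unbalanced $2$-cycle, is a point the paper's figure-based argument passes over silently, but otherwise the two arguments coincide.
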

\begin{proof}
We prove this lemma by induction on $m$. Consider $B_{m}^{1}$ as given in Figure~\ref{Figure5}(b). For $m=2$, the result is true by Example~\ref{ex2} since $\gamma_2 = 1$. Assume that the result is true for $m = r-1$, where $r \geq 3$. That is, 
\begin{equation} \label{eq2}
\chi_{B_{r-1}^{1}}(\lambda) =  (\lambda - 1)^{2}\gamma_{r-1}.
\end{equation}
An application of edge deletion-contraction on the edge $e_1 = uu_{1}$ of $B_{r}^{1}$ is shown in Figure~\ref{Un_1}. Using Lemma~\ref{Path_add}, the chromatic polynomial of the graph in the middle of Figure~\ref{Un_1} can be computed easily, since $\chi_{C_{2}^{-}}(\lambda)$ is known. The chromatic polynomial of the third graph of Figure~\ref{Un_1} is given in Equation~(\ref{eq2}). Therefore 
\begin{align*}
\chi_{B_{r}^{1}}(\lambda) & = (\lambda - 1)^{r-2}  \chi_{C_{2}^{-}}(\lambda) -  (\lambda - 1)^{2} \gamma_{r-1} \\
 & = (\lambda - 1)^{r} - (\lambda - 1)^{2} \frac{(\lambda - 1)^{r-2} - (-1)^{r-2}}{\lambda} \\
 & = (\lambda - 1)^{2}  \frac{ (\lambda - 1)^{r-1} - (-1)^{r-1}}{\lambda}\\
 & = (\lambda - 1)^{2}\gamma_{r}.
\end{align*}
This completes the proof.
\end{proof}

\begin{figure}[h]
\centering
\begin{subfigure}{0.24\textwidth}
\begin{tikzpicture}[scale=0.55]
\node[vertex] (v1) at (9,2) {};
\node [below] at (8.6,2.3) {$v$};
\node[vertex] (v2) at (9,4.3) {};
\node [below] at (8.6,4.6) {$u$};
\node[vertex] (v7) at (11,1.5) {};
\node [below] at (11.75,1.75) {$u_{r-2}$};
\node[vertex] (v8) at (11,4.7) {};
\node [below] at (11.5,5.2) {$u_{1}$};
\node[vertex] (v9) at (11.6,3.1) {};
\node [below] at (12.1,3.35) {$u_{2}$};
\node [below] at (14.0,3.5) {$\longrightarrow$};
\node [below] at (10,5.21) {$e_1$};
\draw[dotted](11.5,2.0) -- (11.62,2.35);

\foreach \from/\to in {v1/v7,v2/v8,v7/v9,v8/v9} \draw (\from) -- (\to);

\draw [dashed] (9,4.3) to node [sloped,above] { } (9,2);
\draw (9,4.3) to[out=-60,in=60] node [sloped,above] { } (9,2);

\end{tikzpicture}
\end{subfigure}
\begin{subfigure}{0.24\textwidth}
\begin{tikzpicture}[scale=0.55]
\node[vertex] (v1) at (9,2) {};
\node [below] at (8.6,2.3) {$v$};
\node[vertex] (v2) at (9,4.3) {};
\node [below] at (8.6,4.6) {$u$};
\node[vertex] (v7) at (11,1.5) {};
\node [below] at (11.75,1.75) {$u_{r-2}$};
\node[vertex] (v8) at (11,4.7) {};
\node [below] at (11.5,5.2) {$u_{1}$};
\node[vertex] (v9) at (11.6,3.1) {};
\node [below] at (12.1,3.35) {$u_{2}$};
\node [below] at (14.0,3.5) {$-$};
\draw[dotted](11.5,2.0) -- (11.62,2.35);

\foreach \from/\to in {v1/v7,v7/v9,v8/v9} \draw (\from) -- (\to);

\draw [dashed] (9,4.3) to node [sloped,above] { } (9,2);
\draw (9,4.3) to[out=-60,in=60] node [sloped,above] { } (9,2);
\end{tikzpicture}
\end{subfigure}
\begin{subfigure}{0.24\textwidth}
\begin{tikzpicture}[scale=0.55]
\node[vertex] (v1) at (9,2) {};
\node [below] at (8.6,2.2) {$v$};
\node[vertex] (v2) at (9,4.3) {};
\node [below] at (8.6,4.5) {$u$};
\node[vertex] (v7) at (11,1.5) {};
\node [below] at (11.8,1.65) {$u_{r-2}$};
\node[vertex] (v8) at (11,4.7) {};
\node [below] at (11.5,5.1) {$u_{2}$};
\node[vertex] (v9) at (11.6,3.1) {};
\node [below] at (12.05,3.45) {$u_{3}$};
\draw[dotted](11.4,2.0) -- (11.62,2.55);

\foreach \from/\to in {v1/v7,v2/v8,v7/v9,v8/v9} \draw (\from) -- (\to);

\draw [dashed] (9,4.3) to node [sloped,above] { } (9,2);
\draw (9,4.3) to[out=-60,in=60] node [sloped,above] { } (9,2);

\end{tikzpicture}
\end{subfigure}
\caption{An application of edge deletion-contraction on $B_{r}^{1}$} \label{Un_1}
\end{figure}
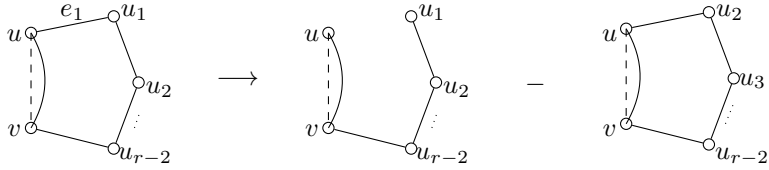

Now we give the formula for the chromatic polynomial of the signed graph $B_{m}^{n}$, where $n \geq 2$. To do so we use Theorem~\ref{E-C}, Lemma~\ref{Path_add}, and Lemma~\ref{B_m}. 

\begin{prop}\label{prop_1}
For $n \geq 2$, the chromatic polynomial of the signed graph $B_{m}^{n}$ is given by 
\begin{equation*}
 \chi_{B_{m}^{n}}(\lambda) = (\lambda - 1)^{2} \gamma_{m}^{n}.
\end{equation*}  
\end{prop}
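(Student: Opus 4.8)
The plan is to argue by induction on the number of pages $n$, using Lemma~\ref{B_m} (the case $n=1$) as the base and reducing the inductive step to a single clean claim: attaching one more $u$--$v$ page to $B_{m}^{n-1}$ multiplies the chromatic polynomial by the factor $\gamma_{m}$. Granting this claim, the inductive hypothesis gives $\chi_{B_{m}^{n}}(\lambda)=\gamma_{m}\,\chi_{B_{m}^{n-1}}(\lambda)=\gamma_{m}\cdot(\lambda-1)^{2}\gamma_{m}^{\,n-1}=(\lambda-1)^{2}\gamma_{m}^{\,n}$, which is exactly the asserted formula.

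To prove the claim I would introduce an auxiliary family interpolating the page being attached. Fix $B_{m}^{n-1}$ and, for $1\le \ell\le m-1$, let $H_{\ell}$ denote the signed graph obtained from $B_{m}^{n-1}$ by adding one further $u$--$v$ path with $\ell-1$ internal vertices (hence with $\ell$ edges, all positive). Then $H_{m-1}=B_{m}^{n}$, while $H_{1}$ is $B_{m}^{n-1}$ together with one extra positive edge $uv$. Since the hub of $B_{m}^{n-1}$ already carries a positive edge $uv$ coming from its $C_{2}^{-}$, this extra edge imposes the same constraint $c(u)\neq c(v)$ and hence no new one, so $\chi_{H_{1}}(\lambda)=\chi_{B_{m}^{n-1}}(\lambda)$.

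The heart of the argument is a second induction, on the length $\ell$ of the attached path. Writing the attached path as $uu_{1}u_{2}\cdots u_{\ell-1}v$, I would apply the signed deletion--contraction formula (Theorem~\ref{E-C}) to the positive edge $e=uu_{1}$. Deleting $e$ leaves $u_{1},\dots,u_{\ell-1}$ as a pendant path of $\ell-1$ vertices hanging at $v$, so Lemma~\ref{Path_add} gives $\chi_{H_{\ell}\setminus e}(\lambda)=(\lambda-1)^{\ell-1}\chi_{B_{m}^{n-1}}(\lambda)$; contracting $e$ merges $u$ and $u_{1}$ and shortens the attached path by one edge, so $H_{\ell}/e=H_{\ell-1}$. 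Hence, for $\ell\ge 2$,
\[
\chi_{H_{\ell}}(\lambda)=(\lambda-1)^{\ell-1}\chi_{B_{m}^{n-1}}(\lambda)-\chi_{H_{\ell-1}}(\lambda).
\]

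Finally I would solve this recurrence by unfolding it against the base value $\chi_{H_{1}}(\lambda)=\chi_{B_{m}^{n-1}}(\lambda)$, obtaining the telescoping geometric sum $\chi_{H_{\ell}}(\lambda)=\chi_{B_{m}^{n-1}}(\lambda)\sum_{i=0}^{\ell-1}(-1)^{i}(\lambda-1)^{(\ell-1)-i}$, which by the finite geometric series expression for $\gamma$ equals $\gamma_{\ell+1}\,\chi_{B_{m}^{n-1}}(\lambda)$. Setting $\ell=m-1$ yields $\chi_{B_{m}^{n}}(\lambda)=\gamma_{m}\,\chi_{B_{m}^{n-1}}(\lambda)$, completing the induction on $n$. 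The main obstacle, and the reason a single deletion--contraction on a page does not close the argument, is that contracting a page edge produces a \emph{mixed} book whose pages have unequal lengths, so the chromatic polynomial cannot be read off directly; the auxiliary family $H_{\ell}$ together with the secondary induction on $\ell$ is precisely what tames this, and the only delicate bookkeeping is the base case $\ell=1$, where one must notice that the extra $uv$ edge is redundant because the hub is already $C_{2}^{-}$.
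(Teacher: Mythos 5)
Your proof is correct and follows essentially the same route as the paper: both establish the recursion $\chi_{B_{m}^{n}}(\lambda)=\gamma_{m}\,\chi_{B_{m}^{n-1}}(\lambda)$ by peeling the last page edge-by-edge with signed deletion--contraction (Theorem~\ref{E-C}) plus Lemma~\ref{Path_add}, and then iterate down to the base case given by Lemma~\ref{B_m}. Your auxiliary family $H_{\ell}$ simply names the intermediate graphs arising in the paper's sequential process, though your explicit treatment of the endpoint $H_{1}$ --- where the contracted page becomes a positive $uv$ edge parallel to the one already present, hence imposes no new constraint --- is a detail the paper leaves implicit.
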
 
\begin{proof}
We sequentially use edge deletion-contraction formula on the edges $uu_{1}^{n},uu_{2}^{n},uu_{3}^{n}$, and so on. This process gives 

\begin{align*}
\chi_{B_{m}^{n}}(\lambda) & = \left[ \sum_{i=0}^{m-2} (-1)^{i} (\lambda - 1)^{(m-2)-i} \right] \chi_{B_{m}^{n-1}}(\lambda) \\
 & = \gamma_m  \chi_{B_{m}^{n-1}}(\lambda).
\end{align*} 
 
Since $\chi_{B_{m}^{1}}(\lambda)$ is known from Lemma~\ref{B_m}, after $n-2$ recursions, we find that 
$$\chi_{B_{m}^{n}}(\lambda) = (\lambda - 1)^{2} \gamma_{m}^{n}.$$ 
This completes the proof.
\end{proof}

It is well known that if $G$ and $H$ are two simple graphs such that $G \cap H$ is a complete graph, then the chromatic polynomial of $G \cup H$ is given by  
\begin{equation} \label{eq3}
\chi_{G \cup H}(\lambda) = \frac{\chi_{G}(\lambda) \cdot \chi_{H}(\lambda)}{\chi_{G \cap H}(\lambda)}.
\end{equation}

Using Equation~(\ref{eq3}), we now determine the chromatic polynomial of an unsigned book graph.

\begin{theorem} \label{B_0}
The chromatic polynomial of $B(m,n)$ is given by 
\begin{equation*}
\chi_{B(m,n)}(\lambda) = \lambda(\lambda-1) \gamma_{m}^{n}.
\end{equation*}
\end{theorem}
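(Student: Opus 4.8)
The plan is to build $B(m,n)$ one page at a time and apply the gluing formula in Equation~(\ref{eq3}) at each stage, arguing by induction on $n$. First I would record the base case: $B(m,1)$ is a single cycle $C_m$, so by the definition of $\gamma_m$ in Equation~(\ref{eq of gamma_m}) we have $\chi_{B(m,1)}(\lambda) = \chi_{C_m}(\lambda) = \lambda(\lambda-1)\gamma_m$, which is exactly the claimed formula with $n=1$.

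For the inductive step, I would write $B(m,k+1)$ as a union $G \cup H$, where $G = B(m,k)$ is the book formed by the first $k$ pages and $H = C_m^{k+1}$ is the newly added page. Since $\chi_{K_2}(\lambda) = \lambda(\lambda-1)$, applying Equation~(\ref{eq3}) gives
\[
\chi_{B(m,k+1)}(\lambda) = \frac{\chi_{G}(\lambda)\,\chi_{H}(\lambda)}{\lambda(\lambda-1)}.
\]
Substituting the inductive hypothesis $\chi_{B(m,k)}(\lambda) = \lambda(\lambda-1)\gamma_m^{k}$ together with $\chi_{C_m}(\lambda) = \lambda(\lambda-1)\gamma_m$, the factor $\lambda(\lambda-1)$ cancels once, leaving $\lambda(\lambda-1)\gamma_m^{k+1}$ and completing the induction. (Equivalently, one could iterate the two-graph formula across all $n$ pages at once to get $\bigl(\lambda(\lambda-1)\gamma_m\bigr)^{n}\big/\bigl(\lambda(\lambda-1)\bigr)^{n-1} = \lambda(\lambda-1)\gamma_m^{n}$, but phrasing it inductively keeps each step a genuine instance of Equation~(\ref{eq3}).)

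The only point requiring care --- the ``main obstacle,'' though a mild one --- is verifying the hypothesis of Equation~(\ref{eq3}), namely that the intersection of the partial book $G$ with the freshly attached page $H$ is precisely the complete graph $K_2$ on $\{u,v\}$. This is immediate from the vertex labelling $V(B(m,n)) = \{u,v\}\cup\{u_{j}^{i}\}$: distinct pages share none of the interior vertices $u_{j}^{i}$, so $G$ and $H$ have only $u$ and $v$ in common and share exactly the edge $uv$. Hence $G \cap H = K_2$ is complete at every stage, and Equation~(\ref{eq3}) applies throughout.
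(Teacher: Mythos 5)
Your proof is correct and follows essentially the same route as the paper: induction on $n$, writing $B(m,n)$ as the union of $B(m,n-1)$ and one new page $C_m$ glued along $K_2$, and applying Equation~(\ref{eq3}) with the base case $B(m,1)=C_m$. The only difference is cosmetic --- you explicitly verify that the intersection is $K_2$, which the paper leaves implicit.
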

\begin{proof}
We prove this theorem by induction on $n$. Clearly $B(m,1) = C_m$, so by the definition of $\gamma_m$ we have
\begin{equation} \label{eq4}
\chi_{B(m,1)}(\lambda) = \chi_{C_m}(\lambda) = \lambda(\lambda-1) \gamma_{m}.   
\end{equation}

Further, the graph $B(m,2)$ is the union of two $m$-cycles whose intersection is $K_2$. Thus by Equation~(\ref{eq3}), we get 
\begin{equation*}
\chi_{B(m,2)}(\lambda) = \lambda(\lambda-1) \gamma_{m}^{2}.
\end{equation*} 

This shows that the result is true for $n=1$ and $n = 2$. Let us assume that the result is true for  $n = r-1$, where $r \geq 3$. That is, 
\begin{equation} \label{eq5}
\chi_{B(m,r-1)}(\lambda) = \lambda(\lambda-1) \gamma_{m}^{r-1}.
\end{equation}

Now we prove that the result holds for $n = r$. The graph $B(m,r)$ can be considered as the union of the graphs $B(m,r-1)$ and $C_m$, whose intersection is $K_2$. Therefore by Equation~(\ref{eq3}), we have 
\begin{equation} \label{eq6}
\chi_{B(m,r)}(\lambda) = \frac{\chi_{B(m,r-1)}(\lambda)  \cdot \chi_{C_m}(\lambda)}{\lambda(\lambda-1)}.
\end{equation}
Using Equations~(\ref{eq4}) and ~(\ref{eq5}) in Equation~(\ref{eq6}), we get 
\begin{equation*}
\chi_{B(m,r)}(\lambda) = \lambda(\lambda-1) \gamma_{m}^{r}.
\end{equation*}
Hence the proof follows by induction.
\end{proof}

From Lemma~\ref{identical chro} and Theorem~\ref{B_0}, it follows that 
$$\chi_{B_0(m,n)}(\lambda)= \chi_{B(m,n)}(\lambda) =  \lambda(\lambda-1) \gamma_{m}^{n}.$$

It is clear that $B_{1}(m,1) \cong C_{m}^{-}$. Thus by Lemma~\ref{C-}, we have $\chi_{B_{1}(m,1)}(\lambda) = (\lambda - 1)^{m}$. We now compute the chromatic polynomials of the signed book graphs $B_{1}(m,n)$, for all $n$.
 
\begin{theorem} \label{B_1}
For $n \geq 1$, the chromatic polynomial of $B_{1}(m,n)$ is given by
\begin{equation*}
 \chi_{B_{1}(m,n)}(\lambda) = (\lambda-1)^m \gamma_{m}^{n-1}.
\end{equation*} 
\end{theorem}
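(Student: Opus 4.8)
The plan is to induct on $n$, mirroring the proof of Proposition~\ref{prop_1}: I would start from the single negative page and then adjoin positive pages one at a time, showing that each positive page contributes exactly a factor $\gamma_m$ to the chromatic polynomial. For the base case $n=1$ I would use that $B_{1}(m,1)\cong C_{m}^{-}$, so Lemma~\ref{C-} gives $\chi_{B_{1}(m,1)}(\lambda)=(\lambda-1)^{m}$, which agrees with $(\lambda-1)^{m}\gamma_{m}^{0}$.

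For the inductive step ($n\ge 2$) the key observation is that $B_{1}(m,n)$ is obtained from $B_{1}(m,n-1)$ by adjoining one extra positive page, namely a positive path $u x_{1} x_{2}\cdots x_{m-2} v$ of length $m-1$ from $u$ to $v$ whose interior vertices are new. Since every edge of this page is positive, I can apply the deletion-contraction formula of Theorem~\ref{E-C} to its edges in turn, exactly as in Proposition~\ref{prop_1}. Writing $g_{\ell}$ for the chromatic polynomial of $B_{1}(m,n-1)$ with a positive path of length $\ell$ attached between $u$ and $v$, applying Theorem~\ref{E-C} to the first edge $ux_{1}$ yields
\[ g_{\ell}=(\lambda-1)^{\ell-1}\,\chi_{B_{1}(m,n-1)}(\lambda)-g_{\ell-1}, \]
because deletion leaves a pendant path of $\ell-1$ edges hanging at $v$ (whose effect is the factor $(\lambda-1)^{\ell-1}$ by Lemma~\ref{Path_add}), while contraction merely shortens the attached path to length $\ell-1$. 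Unrolling this recursion down to $g_{1}$ telescopes into precisely the alternating geometric sum $\sum_{i=0}^{m-2}(-1)^{i}(\lambda-1)^{(m-2)-i}$ that defines $\gamma_{m}$, so that $\chi_{B_{1}(m,n)}(\lambda)=g_{m-1}=\gamma_{m}\,\chi_{B_{1}(m,n-1)}(\lambda)$.

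Combining the base case with this recursion and iterating gives $\chi_{B_{1}(m,n)}(\lambda)=\gamma_{m}^{n-1}\chi_{B_{1}(m,1)}(\lambda)=(\lambda-1)^{m}\gamma_{m}^{n-1}$, as claimed.

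The step I expect to require the most care is the bottom of the internal recursion. After the final contraction the adjoined path collapses to a single edge between $u$ and $v$ that runs parallel to the positive edge $uv$ already present in $B_{1}(m,n-1)$, and I must argue that this parallel positive edge imposes no new coloring constraint, so that $g_{1}=\chi_{B_{1}(m,n-1)}(\lambda)$; this is exactly what makes the telescoping sum close up to $\gamma_{m}$ rather than to $\gamma_{m}$ plus a spurious term. I would also verify that every edge to which Theorem~\ref{E-C} is applied is positive, which holds since the entire adjoined page is positive, and that the adjoined page is indeed balanced (so that the signed graph produced is genuinely $B_{1}(m,n)$, still with a single negative page). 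As a shorter alternative I could instead glue $C_{m}^{-}$ and the unsigned book $B(m,n-1)$ along the positive edge $uv$ and combine Lemma~\ref{C-} with Theorem~\ref{B_0}; this reproduces the formula at once, but it would first require establishing that the factorization~(\ref{eq3}) remains valid for signed graphs sharing a single positive edge, a fact the paper does not state, so I would favour the deletion-contraction induction above.
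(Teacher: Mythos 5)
Your proposal is correct and follows essentially the same route as the paper's proof: repeated deletion-contraction of Theorem~\ref{E-C} along the edges of the newly adjoined positive page, combined with Lemma~\ref{Path_add} for the pendant-path deletion terms, telescoping to the recursion $\chi_{B_{1}(m,n)}(\lambda)=\gamma_{m}\,\chi_{B_{1}(m,n-1)}(\lambda)$ and then iterating from the base case $\chi_{B_{1}(m,1)}(\lambda)=(\lambda-1)^{m}$ supplied by Lemma~\ref{C-}. The only difference is presentational: you make explicit, via $g_{1}$ and the redundancy of a parallel positive edge, the final term that the paper absorbs silently as $(-1)^{m-2}(\lambda-1)^{0}\chi_{B_{1}(m,n-1)}(\lambda)$, which is precisely why the factor here is $\gamma_{m}$ rather than the extra $B_{m}^{n-1}$ term that appears in Theorem~\ref{B_{1}*}.
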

\begin{proof}
Recall that $B_{1}(m,n)$ is the signed book graph with signature $\{uu_{1}^{1}\}$. For $n=1$, result holds true due to Lemma~\ref{C-}. For $n \geq 2$, consider $e_{1} = uu_{1}^{n}$. Using edge deletion-contraction formula on $e_{1}$, we get $$\chi_{B_{1}(m,n)}(\lambda) = \chi_{B_{1}'(m,n-1)}(\lambda) - \chi_{B_{1}''(m,n)}(\lambda),$$ where $B_{1}'(m,n-1)$ denotes the graph obtained from $B_{1}(m,n-1)$ by attaching a path $P_{m-1}$ at vertex $v$ and $B_{1}''(m,n)$ denotes the graph which is almost same as $B_{1}(m,n)$ but one of its pages is a cycle of length $m-1$ and the vertex obtained by contraction of $e_1$ is denoted by $u$ again. 

Using Lemma~\ref{Path_add}, the chromatic polynomial of the signed graph $B_{1}'(m,n-1)$ can be obtained in terms of $\chi_{B_{1}(m,n-1)}(\lambda)$. For $B_{1}''(m,n)$, we apply the edge deletion-contraction formula again on the edge $e_{2} = uu_{2}^{n}$. Repeated use of edge deletion-contraction formula and Lemma~\ref{Path_add} give the chromatic polynomial of $B_{1}(m,n)$ as
\begin{align*}
\chi_{B_{1}(m,n)}(\lambda)  = &(\lambda - 1)^{m-2} \chi_{B_{1}(m,n-1)}(\lambda) - (\lambda - 1)^{m-3} \chi_{B_{1}(m,n-1)}(\lambda)+ \cdots \cdots \\
 & + (-1)^{m-2} (\lambda - 1)^{(m-2)-(m-2)} \chi_{B_{1}(m,n-1)}(\lambda) \\
 = & \left[ \sum_{i=0}^{m-2} (-1)^{i} (\lambda - 1)^{(m-2)-i} \right] \chi_{B_{1}(m,n-1)}(\lambda)\\
  = & \gamma_{m}\chi_{B_{1}(m,n-1)}(\lambda) .
\end{align*}
Since $\chi_{B_{1}(m,1)}(\lambda) = (\lambda - 1)^{m}$, after $n-2$ recursions, we find that $$\chi_{B_{1}(m,n)}(\lambda ) = (\lambda-1)^m \gamma_{m}^{n-1}.$$

This completes the proof.
\end{proof}
 
\begin{theorem} \label{B_{1}*}
For $n \geq 2$, the chromatic polynomial of $B^{uv}(m,n)$ is given by
\begin{align*}
\chi_{B^{uv}(m,n)}(\lambda)  & =(\lambda - 1) \gamma_{m-1} \chi_{B^{uv}(m,n-1)}(\lambda) + (-1)^{m-2} (\lambda - 1)^{2} \gamma_{m}^{n-1}\\
 & = (\lambda - 1)^{m+n-1} \gamma_{m-1}^{n-1} + (-1)^{m-2} \gamma_m (\lambda - 1)^{2} \frac{(\lambda - 1)^{n-1} \gamma_{m-1}^{n-1} - \gamma_{m}^{n-1} }{(\lambda-1) \gamma_{m-1} - \gamma_{m}}.
\end{align*}
\end{theorem}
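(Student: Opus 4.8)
The plan is to prove the two displayed equalities in turn: first the recursion on the top line, and then its closed-form solution on the bottom line. Write $a_n := \chi_{B^{uv}(m,n)}(\lambda)$. The base value is $a_1 = (\lambda-1)^m$, since $B^{uv}(m,1)$ is simply an unbalanced $m$-cycle $C_m^-$ and Lemma~\ref{C-} applies.

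For the recursion I would peel off the $n$-th page by applying the deletion--contraction formula of Theorem~\ref{E-C} to its positive edges $e_j=uu_j^n$ in the order $j=1,2,\dots,m-2$. Deleting $e_j$ in the graph obtained after the previous $j-1$ contractions detaches the tail $u_j^n u_{j+1}^n\cdots u_{m-2}^n v$ of the $n$-th page as a pendant path hanging at $v$, over the base graph $B^{uv}(m,n-1)$; by Lemma~\ref{Path_add} this deletion term equals $(\lambda-1)^{m-1-j}a_{n-1}$. Contracting $e_j$ merges $u_j^n$ into $u$ and shortens the $n$-th page by one edge, so the procedure repeats. Substituting each contraction term back into the preceding identity yields
\begin{equation*}
a_n=\Bigl[\sum_{j=1}^{m-2}(-1)^{j-1}(\lambda-1)^{m-1-j}\Bigr]a_{n-1}+(-1)^{m-2}\chi_{\Gamma}(\lambda),
\end{equation*}
where $\Gamma$ is the signed graph remaining after all $m-2$ contractions. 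Re-indexing the bracketed sum by $i=j-1$ identifies it with $(\lambda-1)\gamma_{m-1}$ through the finite geometric-series form of $\gamma_{m-1}$, which is exactly the coefficient in the claimed recursion.

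The decisive step is to recognise $\Gamma$. Once $e_1,\dots,e_{m-2}$ have all been contracted, the internal vertices $u_1^n,\dots,u_{m-2}^n$ have been absorbed into $u$ and the final page edge $u_{m-2}^n v$ has become a new positive edge joining $u$ and $v$; together with the negative edge $uv$ that was present throughout, this positive edge forms an unbalanced $2$-cycle, while the other $n-1$ pages are left intact. Thus $\Gamma$ coincides with the graph $B_m^{n-1}$ of Proposition~\ref{prop_1}, namely $B(m,n-1)$ with its central edge replaced by a $C_2^-$, and therefore $\chi_\Gamma(\lambda)=(\lambda-1)^2\gamma_m^{n-1}$ (using Lemma~\ref{B_m} when $n-1=1$ and Proposition~\ref{prop_1} when $n-1\ge 2$). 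Inserting this value completes the first equality. I expect this identification to be the main obstacle, since one must keep careful track of the sign of the contracted edge and verify that it produces a genuine parallel $C_2^-$ rather than collapsing to a single $uv$ edge.

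It remains to solve the recurrence $a_n=A\,a_{n-1}+c\,\gamma_m^{n-1}$ with $A=(\lambda-1)\gamma_{m-1}$ and $c=(-1)^{m-2}(\lambda-1)^2$. Unrolling from $a_1=(\lambda-1)^m$ gives
\begin{equation*}
a_n=A^{n-1}a_1+c\sum_{j=0}^{n-2}A^{\,j}\gamma_m^{\,n-1-j},
\end{equation*}
and summing the geometric series produces $\gamma_m\bigl(A^{n-1}-\gamma_m^{n-1}\bigr)/(A-\gamma_m)$ for the last factor. Substituting $A$, $c$, and $a_1$, and using $A^{n-1}a_1=(\lambda-1)^{m+n-1}\gamma_{m-1}^{n-1}$, reproduces the stated closed form; this final stage is routine algebra once the recursion is established.
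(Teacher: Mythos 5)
Your proof is correct and follows essentially the same route as the paper: repeated deletion--contraction along the edges $uu_j^n$ of the $n$-th page, with Lemma~\ref{Path_add} handling the deletion terms, identification of the final contracted graph as $B_{m}^{n-1}$, and unrolling of the resulting recursion via a geometric series to get the closed form. Your explicit observation that Lemma~\ref{B_m} is needed for the case $n-1=1$ (where Proposition~\ref{prop_1} does not formally apply) is a small point of extra care that the paper glosses over.
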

\begin{proof}
Recall that $B^{uv}(m,n)$ is the signed book graph with signature $\{uv\}$. Consider $e_{1} = uu_{1}^{n}$. Using edge deletion-contraction formula on $e_{1}$, we have $$\chi_{B^{uv}(m,n)}(\lambda) = \chi_{\widetilde{B^{uv}}(m,n-1)}(\lambda) - \chi_{\widetilde{\widetilde{B^{uv}}}(m,n)}(\lambda),$$
 where $\widetilde{B^{uv}}(m,n-1)$ denotes the graph obtained from $B^{uv}(m,n-1)$ by attaching a path $P_{m-1}$ at the vertex $v$ and $\widetilde{\widetilde{B^{uv}}}(m,n)$ denotes the graph which is almost same as $B^{uv}(m,n)$ but one of its pages is a cycle of length $m-1$.
  
Using Lemma~\ref{Path_add}, the chromatic polynomial of the signed graph $\widetilde{B^{uv}}(m,n-1)$ can be expressed in terms of $\chi_{B^{uv}(m,n-1)}(\lambda)$. For the graph $\widetilde{\widetilde{B^{uv}}}(m,n)$, we apply the edge deletion-contraction formula again on the edge $e_{2} = uu_{2}^{n}$. 

Applying edge deletion-contraction formula repeatedly and using Lemma~\ref{Path_add}, we obtain the chromatic polynomial of $B^{uv}(m,n)$ as
\begin{align*}
\chi_{B^{uv}(m,n)}(\lambda)  = &(\lambda - 1)^{m-2} \chi_{B^{uv}(m,n-1)}(\lambda) - (\lambda - 1)^{m-3} \chi_{B^{uv}(m,n-1)}(\lambda)+ \cdots \cdots  \\
& +(-1)^{m-3} (\lambda - 1)^{(m-2)-(m-3)} \chi_{B^{uv}(m,n-1)}(\lambda) + (-1)^{m-2} (\lambda - 1)^{(m-2)-(m-2)} \chi_{B_{m}^{n-1}}(\lambda)\\
   =& \Big[ \sum_{i=0}^{m-3} (-1)^{i} (\lambda - 1)^{(m-2)-i} \Big] \chi_{B^{uv}(m,n-1)}(\lambda) + (-1)^{m-2} \chi_{B_{m}^{n-1}}(\lambda).
\end{align*}
Note that, in the last step of edge deletion-contraction formula, the resulting graph is nothing but the signed graph $B_{m}^{n-1}$. We know by Proposition~\ref{prop_1} that $\chi_{B_{m}^{n-1}}(\lambda)= (\lambda - 1)^{2}\gamma_{m}^{n-1}$. Thus we have 
$$\chi_{B^{uv}(m,n)}(\lambda)  =(\lambda - 1) \gamma_{m-1} \chi_{B^{uv}(m,n-1)}(\lambda) + (-1)^{m-2} (\lambda - 1)^{2} \gamma_{m}^{n-1}.$$

Now, after $n-2$ recursions, we have 
\begin{align*}
\chi_{B^{uv}(m,n)}(\lambda) &  = (\lambda - 1)^{m+n-1} \gamma_{m-1}^{n-1}  + (-1)^{m-2} \frac{\gamma_{m}^{n+1}}{\gamma_{m-1}^{2}} \sum_{i=2}^{n} \left( \frac{ (\lambda-1) \gamma_{m-1}}{\gamma_m} \right)^{i} \\
  & = (\lambda - 1)^{m+n-1} \gamma_{m-1}^{n-1} + (-1)^{m-2} \gamma_m (\lambda - 1)^{2} \frac{(\lambda - 1)^{n-1} \gamma_{m-1}^{n-1} - \gamma_{m}^{n-1} }{(\lambda-1) \gamma_{m-1} - \gamma_{m}}.
\end{align*}
This completes the proof.
\end{proof}

We now give a formula for the chromatic polynomial of $B_{l}(m,n)$ for $2 \leq l \leq n-1 $.

\begin{theorem} \label{B_l}
Let $n \geq 3$ and $2 \leq l \leq n-1$. The chromatic polynomial of $B_{l}(m,n)$ is given by  
\begin{equation*}
 \chi_{B_{l}(m,n)}(\lambda) = \gamma_{m}^{n-l} \chi_{B^{uv}(m,l)}(\lambda).
\end{equation*} 
\end{theorem}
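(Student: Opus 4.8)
The plan is to peel off balanced pages one at a time. Since $l \le n-1$, the $n$-th page $C_m^n$ of $B_l(m,n)$ contains no negative edge (the negative edges of $\sigma_l$ are $uu_1^1,\ldots,uu_1^l$), so it is a positive $m$-cycle sharing only the spine edge $uv$ and the vertices $u,v$ with the rest of the graph. First I would apply edge deletion-contraction (Theorem~\ref{E-C}) to the positive edge $uu_1^n$ and then successively to $uu_2^n,\ldots,uu_{m-2}^n$, exactly as in the proof of Theorem~\ref{B_1}. At the $j$-th step the deletion attaches a path of $m-1-j$ new vertices at $v$, contributing $(\lambda-1)^{m-1-j}\chi_{B_l(m,n-1)}(\lambda)$ by Lemma~\ref{Path_add}, while the contraction shortens the page by one. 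Collecting the resulting telescoping sum gives the recursion
\[
\chi_{B_l(m,n)}(\lambda) = \Big[\sum_{i=0}^{m-2}(-1)^i(\lambda-1)^{(m-2)-i}\Big]\chi_{B_l(m,n-1)}(\lambda) = \gamma_m\,\chi_{B_l(m,n-1)}(\lambda),
\]
valid for every $n>l$. Crucially, this computation is identical to the one in Theorem~\ref{B_1} and Proposition~\ref{prop_1}, since it only manipulates the balanced page $n$ together with $u$ and $v$ and never sees the signs carried by the remaining pages.

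Since page $k$ is balanced for each $k$ with $l+1\le k\le n$, I would iterate the recursion $n-l$ times to obtain
\[
\chi_{B_l(m,n)}(\lambda) = \gamma_m^{\,n-l}\,\chi_{B_l(m,l)}(\lambda).
\]
In $B_l(m,l)$ all $l$ pages are unbalanced, so this signed book graph is switching isomorphic to $B^{uv}(m,l)$ --- the very switching $B_n(m,n)\sim B^{uv}(m,n)$ recorded at the start of this section, specialised to $n=l$. Because the chromatic polynomial is a switching invariant, $\chi_{B_l(m,l)}(\lambda)=\chi_{B^{uv}(m,l)}(\lambda)$, and substituting this into the displayed identity yields $\chi_{B_l(m,n)}(\lambda)=\gamma_m^{\,n-l}\chi_{B^{uv}(m,l)}(\lambda)$, as required.

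The telescoping itself is routine, being copied from the earlier proofs. The one place I expect to need care is the final contraction in each peeling: when the page has shrunk to a triangle, contracting $uu_{m-2}^n$ folds the last page-edge onto the spine edge $uv$, producing a parallel edge. Because page $n$ is balanced this parallel edge is positive, matching the positive spine edge, so it may be deleted without changing the chromatic polynomial; this is what guarantees the contracted graph is genuinely $B_l(m,n-1)$ and not a signed multigraph. Once this bookkeeping is confirmed, both the recursion and the concluding switching step are immediate.
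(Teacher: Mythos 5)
Your proof is correct and takes essentially the same route as the paper's: peel the balanced $n$-th page by repeated deletion-contraction to obtain the recursion $\chi_{B_{l}(m,n)}(\lambda)=\gamma_{m}\,\chi_{B_{l}(m,n-1)}(\lambda)$, iterate it down to $B_{l}(m,l)$, and finish with the switching equivalence $B_{l}(m,l)\sim B^{uv}(m,l)$ together with switching invariance of the chromatic polynomial. Your explicit handling of the final contraction (the positive parallel edge on $uv$, which may be discarded since a duplicated positive edge imposes no new constraint) is a detail the paper leaves implicit, and you resolve it correctly.
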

\begin{proof}
Repeated use of edge deletion-contraction formula and Lemma~\ref{Path_add} give that
\begin{equation}
\chi_{B_{l}(m,n)}(\lambda) = \frac{(\lambda - 1)^{m-1} + (-1)^{m}}{\lambda} \chi_{B_{l}(m,n-1)}(\lambda)= \gamma_m \chi_{B_{l}(m,n-1)}(\lambda). \label{final}
\end{equation} 
Applying the recursion of Equation~(\ref{final}) $(n-l-1)$ times, we have
\[\chi_{B_{l}(m,n)}(\lambda) = \gamma_{m}^{n-l} \chi_{B_{l}(m,l)}(\lambda).\]
Note that $B_{l}(m,l)$ is switching equivalent to $B^{uv}(m,l)$. Therefore we have
\[ \chi_{B_{l}(m,n)}(\lambda) = \gamma_{m}^{n-l} \chi_{B^{uv}(m,l)}(\lambda).\]
 This completes the proof.
\end{proof}

\section{Zero-free chromatic polynomials of signed book graphs}\label{balanced-poly}

In \cite{Zaslavsky2}, the author explained that the chromatic polynomial and the zero-free chromatic polynomial of $\Sigma$ are different unless $\Sigma$ is balanced. 

We reformulate Lemma~\ref{Path_add} in terms of the zero-free chromatic polynomial.

\begin{lemm} \label{Path_add1}
Let $\Sigma$ be a signed graph. Then 
$$\chi_{\Sigma^{t+1}}^{b}(\lambda) = (\lambda-1)^{t} \chi_{\Sigma}^{b}(\lambda).$$
\end{lemm}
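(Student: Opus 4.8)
The plan is to mirror exactly the induction on $t$ used in the proof of Lemma~\ref{Path_add}, replacing each chromatic polynomial by its zero-free analogue. The two ingredients that make the original argument work are the edge deletion-contraction formula and the explicit value of the chromatic polynomial of a path; both have zero-free counterparts available in the excerpt. Theorem~\ref{E-C} supplies the zero-free deletion-contraction identity $\chi_{\Sigma}^b(\lambda) = \chi_{\Sigma\setminus e}^b(\lambda) - \chi_{\Sigma/e}^b(\lambda)$ for a positive edge $e$, and since the path $P_{t+1}$ attached to $\Sigma$ may be taken to consist entirely of positive edges, every contraction step is legitimate. The only extra fact I need is the zero-free chromatic polynomial of a path, namely $\chi_{P_{t+1}}^b(\lambda) = \lambda(\lambda-1)^t$; this holds because a path is a tree and hence balanced, so by Lemma~\ref{identical chro} its zero-free chromatic polynomial coincides with its ordinary chromatic polynomial $\lambda(\lambda-1)^t$.

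First I would treat the base case $t=1$. Let $e_1 = uu_1$ be the single edge attaching $u_1$ to the vertex $u$ of $\Sigma$; as a tree edge it can be taken positive. Applying the zero-free deletion-contraction formula of Theorem~\ref{E-C} to $e_1$ gives $\chi_{\Sigma^{2}}^b(\lambda) = \chi_{\Sigma^{2}\setminus e_1}^b(\lambda) - \chi_{\Sigma^{2}/e_1}^b(\lambda)$. Deleting $e_1$ leaves an isolated vertex $u_1$ disjoint from $\Sigma$, contributing a factor of $\lambda$, while contracting $e_1$ identifies $u_1$ with $u$ and recovers $\Sigma$ itself. Hence $\chi_{\Sigma^{2}}^b(\lambda) = \lambda\,\chi_{\Sigma}^b(\lambda) - \chi_{\Sigma}^b(\lambda) = (\lambda-1)\chi_{\Sigma}^b(\lambda)$, which is the claim for $t=1$.

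Next I would run the inductive step, assuming $\chi_{\Sigma^{r}}^b(\lambda) = (\lambda-1)^{r-1}\chi_{\Sigma}^b(\lambda)$ and proving the formula for $t=r$. Taking $e = uu_1$ to be the positive edge joining the attached path to $\Sigma$, deletion-contraction yields $\chi_{\Sigma^{r+1}}^b(\lambda) = \chi_{P_r \cup \Sigma}^b(\lambda) - \chi_{\Sigma^{r}}^b(\lambda)$, where $P_r$ and $\Sigma$ are disjoint. The zero-free chromatic polynomial is multiplicative over disjoint unions, so $\chi_{P_r \cup \Sigma}^b(\lambda) = \lambda(\lambda-1)^{r-1}\chi_{\Sigma}^b(\lambda)$ using the path value above. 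Substituting this and the induction hypothesis produces
\begin{align*}
\chi_{\Sigma^{r+1}}^b(\lambda) &= \lambda(\lambda-1)^{r-1}\chi_{\Sigma}^b(\lambda) - (\lambda-1)^{r-1}\chi_{\Sigma}^b(\lambda) \\
&= (\lambda-1)^{r}\chi_{\Sigma}^b(\lambda),
\end{align*}
completing the induction.

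I do not anticipate a genuine obstacle here, since the argument is a faithful transcription of Lemma~\ref{Path_add}; the only points requiring a moment's care are confirming that the attaching edges may be chosen positive (so that Theorem~\ref{E-C} applies, as it is stated only for positive edges) and that the zero-free polynomial is multiplicative on disjoint unions and agrees with the ordinary chromatic polynomial on the balanced graph $P_{t+1}$ via Lemma~\ref{identical chro}. If one prefers to avoid invoking the path value through balance, an alternative is to prove $\chi_{P_{t+1}}^b(\lambda)=\lambda(\lambda-1)^t$ by a short separate induction using the same deletion-contraction step, but the balance argument is cleaner.
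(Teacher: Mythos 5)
Your proof is correct and is essentially the paper's own argument: the paper gives no proof of this lemma, indicating only that it is the zero-free reformulation of Lemma~\ref{Path_add}, and your induction (zero-free deletion-contraction from Theorem~\ref{E-C}, the path value obtained via balance and Lemma~\ref{identical chro}, multiplicativity on disjoint unions) is exactly the intended transcription of that proof. Your added care about choosing the attaching edges positive — legitimate since tree edges can always be switched to positive without changing the polynomials — is a detail the paper glosses over, not a different approach.
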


\begin{theorem} \label{C_n}
For each $n \geq 2$, the zero-free chromatic polynomial of $C_{n}^{-}$ is given by 
$$\chi_{C_{n}^{-}}^{b}(\lambda) = (\lambda-1)^{n} - (-1)^{n} = \lambda \gamma_{n+1}.$$ 
\end{theorem}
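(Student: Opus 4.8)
The plan is to prove the formula for $\chi^b_{C_n^-}(\lambda)$ by induction on $n$, exactly mirroring the structure of the proof of Lemma~\ref{C-}, but now working with the zero-free chromatic polynomial and its deletion-contraction recurrence. The base case $n=2$ must be verified directly: I would count the zero-free proper colorings of $C_2^-$ in $2k$ signed colors drawn from $\{-k,\ldots,-1,1,\ldots,k\}$ and check that the resulting polynomial equals $(\lambda-1)^2 - 1$, which should match the claimed value $\lambda\gamma_3$ since $\gamma_3 = (\lambda-1)^2 - (\lambda-1) + \cdots$; a quick algebraic check confirms $(\lambda-1)^2-1 = \lambda(\lambda-2) = \lambda\gamma_3$.

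For the inductive step I would pick a positive edge $e$ of $C_r^-$ (one exists, since an unbalanced cycle is switching-equivalent to one with exactly one negative edge, so it has at least one positive edge for $r\geq 2$) and apply the zero-free edge deletion-contraction formula from Theorem~\ref{E-C}, namely $\chi^b_{C_r^-}(\lambda) = \chi^b_{C_r^-\setminus e}(\lambda) - \chi^b_{C_r^-/e}(\lambda)$. Deleting $e$ leaves a path $P_r$, whose zero-free chromatic polynomial I would compute as $\lambda(\lambda-1)^{r-1}$ (this follows from Lemma~\ref{identical chro}, since a path is balanced, together with the standard $\chi_{P_r}(\lambda)=\lambda(\lambda-1)^{r-1}$). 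Contracting $e$ yields the unbalanced cycle $C_{r-1}^-$, whose zero-free polynomial is $(\lambda-1)^{r-1} - (-1)^{r-1}$ by the induction hypothesis. Substituting gives
\begin{equation*}
\chi^b_{C_r^-}(\lambda) = \lambda(\lambda-1)^{r-1} - \big[(\lambda-1)^{r-1} - (-1)^{r-1}\big] = (\lambda-1)^r - (-1)^r,
\end{equation*}
which is exactly the claimed formula, completing the induction.

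Finally I would record the identity $(\lambda-1)^n - (-1)^n = \lambda\gamma_{n+1}$ as a purely algebraic consequence of the definition of $\gamma_m$ in Equation~(\ref{eq of gamma_m}): setting $m = n+1$ there gives $\gamma_{n+1} = \big[(\lambda-1)^n - (-1)^n\big]/\lambda$, so multiplying by $\lambda$ recovers the displayed expression. I expect the only genuine subtlety to be the base case, where one must argue carefully with the actual definition of zero-free proper coloring (colors forbidden to be equal across a positive edge, and forbidden to be opposite across a negative edge) rather than invoking the odd-argument counting used for $\chi_\Sigma$; everything after that is routine and exactly parallels Lemma~\ref{C-}. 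It is worth double-checking that contracting the positive edge of $C_r^-$ preserves the negative cycle and hence really produces an unbalanced $C_{r-1}^-$, so that the induction hypothesis applies.
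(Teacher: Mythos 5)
Your proof is correct and follows exactly the approach the paper intends: the paper's own proof of this theorem is the single line ``The proof follows by induction on $n$,'' and your argument---base case $n=2$ by direct counting, then zero-free deletion--contraction on a positive edge reducing $C_r^-$ to $P_r$ and $C_{r-1}^-$---is precisely the induction that parallels Lemma~\ref{C-}. The details you supply (switching to guarantee a positive edge, using Lemma~\ref{identical chro} for the balanced path, and the algebraic identity $\lambda\gamma_{n+1}=(\lambda-1)^n-(-1)^n$) all check out.
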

\begin{proof}
The proof follows by induction on $n$.
\end{proof}

Now we present all the results of Section~\ref{chrom-poly} in terms of the zero-free chromatic polynomials. We omit their proofs as all these proofs are similar to the corresponding proofs presented for chromatic polynomials.

\begin{lemm} \label{B_m1}
For $m \geq 2$, the zero-free chromatic polynomial of $B_{m}^{1}$ is given by
 $$\chi_{B_{m}^{1}}^{b}(\lambda) = \lambda(\lambda-2)\gamma_m.$$
\end{lemm}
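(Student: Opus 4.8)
The plan is to prove Lemma~\ref{B_m1} by induction on $m$, mirroring the structure of Lemma~\ref{B_m} but using the zero-free deletion-contraction identity from Theorem~\ref{E-C} together with Lemma~\ref{Path_add1} in place of Lemma~\ref{Path_add}. The base case is $m=2$: here $B_2^1 = C_2^-$, and by Theorem~\ref{C_n} we have $\chi_{C_2^-}^b(\lambda) = (\lambda-1)^2 - 1 = \lambda(\lambda-2)$, which matches $\lambda(\lambda-2)\gamma_2$ since $\gamma_2 = 1$. This settles the anchor of the induction.

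For the inductive step, assume the formula holds for $m = r-1$, namely $\chi_{B_{r-1}^1}^b(\lambda) = \lambda(\lambda-2)\gamma_{r-1}$. I would apply the zero-free deletion-contraction formula to the positive edge $e_1 = uu_1$ of $B_r^1$, exactly as in Figure~\ref{Un_1} for the ordinary chromatic polynomial. Deleting $e_1$ leaves the unbalanced $2$-cycle $C_2^-$ with a pendant path of $r-2$ edges attached; by Lemma~\ref{Path_add1} its zero-free polynomial is $(\lambda-1)^{r-2}\chi_{C_2^-}^b(\lambda) = (\lambda-1)^{r-2}\lambda(\lambda-2)$. Contracting $e_1$ produces the graph $B_{r-1}^1$, whose zero-free polynomial is $\lambda(\lambda-2)\gamma_{r-1}$ by the induction hypothesis. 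Hence
\begin{align*}
\chi_{B_r^1}^b(\lambda) &= (\lambda-1)^{r-2}\,\lambda(\lambda-2) - \lambda(\lambda-2)\gamma_{r-1} \\
&= \lambda(\lambda-2)\left[(\lambda-1)^{r-2} - \gamma_{r-1}\right].
\end{align*}

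It remains to identify the bracketed expression with $\gamma_r$. Using the definition $\gamma_{r-1} = \bigl((\lambda-1)^{r-2}-(-1)^{r-2}\bigr)/\lambda$ from Equation~(\ref{eq of gamma_m}), one computes that $(\lambda-1)^{r-2} - \gamma_{r-1}$ simplifies to $\bigl((\lambda-1)^{r-1} - (-1)^{r-1}\bigr)/\lambda = \gamma_r$; this is the identical algebraic manipulation carried out in the proof of Lemma~\ref{B_m}, so it is routine. This yields $\chi_{B_r^1}^b(\lambda) = \lambda(\lambda-2)\gamma_r$ and closes the induction. The only point requiring mild care is the base case, since one must verify that $\chi_{C_2^-}^b$ from Theorem~\ref{C_n} indeed factors as $\lambda(\lambda-2)$ rather than carrying the $(\lambda-1)$ factor seen in the ordinary chromatic polynomial; everything else parallels the argument already given for $\chi_{B_m^1}(\lambda)$, with the common factor $\lambda(\lambda-2)$ replacing $(\lambda-1)^2$ throughout.
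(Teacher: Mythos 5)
Your proof is correct and is exactly the argument the paper intends: the paper omits the proof of Lemma~\ref{B_m1}, stating only that it is ``similar to the corresponding proof'' for Lemma~\ref{B_m}, and your induction on $m$ (base case $C_2^-$ via Theorem~\ref{C_n}, deletion-contraction on the positive edge $uu_1$, Lemma~\ref{Path_add1} for the deleted graph, and the identity $(\lambda-1)^{r-2}-\gamma_{r-1}=\gamma_r$) is precisely that adaptation, with the factor $\lambda(\lambda-2)$ replacing $(\lambda-1)^2$ throughout.
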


\begin{prop}\label{prop_1-1}
For $n \geq 2$, the zero-free chromatic polynomial of $B_{m}^{n}$ is given by 
\[ \chi_{B_{m}^{n}}^{b}(\lambda) =  \lambda(\lambda-2)\gamma_m^{n}.\]
\end{prop}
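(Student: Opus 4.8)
The plan is to prove Proposition~\ref{prop_1-1} by mimicking exactly the argument used for the ordinary chromatic polynomial in Proposition~\ref{prop_1}, but working throughout with the zero-free chromatic polynomial $\chi^{b}$ and its deletion-contraction formula from Theorem~\ref{E-C}. The base case is already supplied by Lemma~\ref{B_m1}, which gives $\chi^{b}_{B_m^1}(\lambda)=\lambda(\lambda-2)\gamma_m$, so the entire content is to establish the one-step recursion $\chi^{b}_{B_m^n}(\lambda)=\gamma_m\,\chi^{b}_{B_m^{n-1}}(\lambda)$ and then iterate.

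First I would peel off the $n$-th page of $B_m^n$ by applying the zero-free deletion-contraction formula sequentially to the positive edges $uu_1^n, uu_2^n, \ldots$ along that page, precisely as in the proof of Proposition~\ref{prop_1}. Each contraction shortens the attached path by one and each deletion leaves a graph to which the zero-free path-attachment lemma, Lemma~\ref{Path_add1}, applies; the lemma lets me pull out the appropriate power $(\lambda-1)^{j}$ while the remaining factor is always $\chi^{b}_{B_m^{n-1}}(\lambda)$. Collecting the signs produced by the telescoping contractions yields the bracketed alternating sum $\sum_{i=0}^{m-2}(-1)^i(\lambda-1)^{(m-2)-i}$, which by the geometric-series identity for $\gamma_m$ equals $\gamma_m$. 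This gives the recursion
\begin{equation*}
\chi^{b}_{B_m^n}(\lambda)=\gamma_m\,\chi^{b}_{B_m^{n-1}}(\lambda).
\end{equation*}

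Finally I would iterate this recursion $n-1$ times down to the base case $\chi^{b}_{B_m^1}(\lambda)=\lambda(\lambda-2)\gamma_m$ from Lemma~\ref{B_m1}, obtaining
\begin{equation*}
\chi^{b}_{B_m^n}(\lambda)=\gamma_m^{\,n-1}\,\chi^{b}_{B_m^1}(\lambda)=\lambda(\lambda-2)\gamma_m^{\,n},
\end{equation*}
which is the claimed formula. Since the paper explicitly says the omitted zero-free proofs are parallel to the chromatic-polynomial ones, the only point demanding care—and hence the main (mild) obstacle—is verifying that the zero-free base value genuinely carries the factor $\lambda(\lambda-2)$ rather than $(\lambda-1)^2$. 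This difference traces back to how $\chi^{b}_{C_2^-}$ behaves in even-color counts: the factor $(\lambda-2)$ reflects that on the unbalanced $2$-cycle replacing edge $uv$, zero-free colorings must avoid both equal and sign-negated equal values, so one endpoint sees $\lambda-2$ forbidden choices rather than $\lambda-1$. Once that base case is taken as given from Lemma~\ref{B_m1}, the recursion and its iteration are entirely formal, and the proof concludes by induction on $n$.
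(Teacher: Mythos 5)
Your proposal is correct and follows exactly the route the paper intends: the paper omits this proof precisely because it is the zero-free analogue of the proof of Proposition~\ref{prop_1}, namely sequential zero-free deletion-contraction (Theorem~\ref{E-C}) along the edges of one page together with Lemma~\ref{Path_add1}, giving the recursion $\chi^{b}_{B_{m}^{n}}(\lambda)=\gamma_m\,\chi^{b}_{B_{m}^{n-1}}(\lambda)$, iterated down to the base case $\chi^{b}_{B_{m}^{1}}(\lambda)=\lambda(\lambda-2)\gamma_m$ of Lemma~\ref{B_m1}. Your closing observation correctly identifies the only substantive difference from the ordinary chromatic polynomial computation, namely that the base value carries the factor $\lambda(\lambda-2)$ coming from $\chi^{b}_{C_2^-}(\lambda)=\lambda(\lambda-2)$ instead of $(\lambda-1)^2$.
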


From Lemma~\ref{identical chro} and Theorem~\ref{B_0}, it follows that 
$$\chi_{B_0(m,n)}^{b}(\lambda) =  \lambda(\lambda-1) \gamma_{m}^{n}.$$

\begin{theorem} \label{B_1-1}
For $n \geq 1$, the zero-free chromatic polynomial of $B_{1}(m,n)$ is given by
\[ \chi_{B_{1}(m,n)}^{b}(\lambda) = \lambda \gamma_{m}^{n-1} \gamma_{m+1}.\]
\end{theorem}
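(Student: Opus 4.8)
The plan is to reproduce, step for step, the induction used for the ordinary chromatic polynomial in Theorem~\ref{B_1}, replacing $\chi$ by $\chi^{b}$ throughout, invoking the zero-free path-attachment identity Lemma~\ref{Path_add1} in place of Lemma~\ref{Path_add}, and using the zero-free base value from Theorem~\ref{C_n} in place of Lemma~\ref{C-}. The entire argument rests on the second (zero-free) half of the deletion-contraction identity in Theorem~\ref{E-C}, which applies because every edge we manipulate --- the spokes $uu_{1}^{n}, uu_{2}^{n}, \ldots$ of the final page --- is positive, the single negative edge $uu_{1}^{1}$ lying on the first page.

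First I would dispose of the base case $n=1$. Since $B_{1}(m,1) \cong C_{m}^{-}$, Theorem~\ref{C_n} gives $\chi_{B_{1}(m,1)}^{b}(\lambda) = (\lambda-1)^{m} - (-1)^{m} = \lambda \gamma_{m+1}$, which is exactly the claimed formula $\lambda \gamma_{m}^{n-1} \gamma_{m+1}$ at $n=1$, since $\gamma_{m}^{0}=1$.

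Next, for $n \geq 2$, I would establish the one-step recursion $\chi_{B_{1}(m,n)}^{b}(\lambda) = \gamma_{m}\, \chi_{B_{1}(m,n-1)}^{b}(\lambda)$. Applying the zero-free deletion-contraction formula to $e_{1}=uu_{1}^{n}$, deletion turns the $n$-th page into a path $P_{m-1}$ attached at $v$ to the graph $B_{1}(m,n-1)$, so by Lemma~\ref{Path_add1} its contribution is $(\lambda-1)^{m-2}\chi_{B_{1}(m,n-1)}^{b}(\lambda)$; contraction merges $u$ and $u_{1}^{n}$ and shortens the $n$-th page to a cycle of length $m-1$. Iterating deletion-contraction on $e_{2}=uu_{2}^{n}$ and then down the remaining spokes of the page produces the telescoping coefficient $\sum_{i=0}^{m-2}(-1)^{i}(\lambda-1)^{(m-2)-i}$ multiplying $\chi_{B_{1}(m,n-1)}^{b}(\lambda)$, and this bracketed sum is precisely $\gamma_{m}$ by the geometric-series expression for $\gamma_{m}$. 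Finally, iterating the recursion $n-1$ times and substituting the base case yields $\chi_{B_{1}(m,n)}^{b}(\lambda) = \gamma_{m}^{n-1}\chi_{B_{1}(m,1)}^{b}(\lambda) = \gamma_{m}^{n-1}\cdot \lambda \gamma_{m+1}$, as required.

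The only point requiring genuine care --- and where I expect the main, though minor, obstacle --- is verifying that each intermediate contraction leaves the first page's lone negative edge intact and keeps every processed spoke positive, so that Theorem~\ref{E-C} is legitimately applicable at every step and the only departure from the ordinary-chromatic-polynomial computation of Theorem~\ref{B_1} is the base value supplied by Theorem~\ref{C_n}. Once this bookkeeping is confirmed, the computation is word-for-word the same as in the unsigned-count case.
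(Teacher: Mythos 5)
Your proposal is correct and follows exactly the route the paper intends: the paper omits this proof, stating it is ``similar to the corresponding proofs presented for chromatic polynomials,'' and your argument is precisely that analogue of Theorem~\ref{B_1} --- the zero-free deletion-contraction of Theorem~\ref{E-C} on the positive spokes of the last page, Lemma~\ref{Path_add1} for the deleted terms, the telescoping coefficient $\gamma_m$, and the base value $\chi^{b}_{C_m^-}(\lambda)=\lambda\gamma_{m+1}$ from Theorem~\ref{C_n}. Your bookkeeping remark is also sound, since the lone negative edge $uu_1^1$ lies on the first page and is never deleted or contracted, so every manipulated edge stays positive.
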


\begin{theorem} \label{B_{1}*-1}
For $n \geq 2$, the zero-free chromatic polynomial of $B^{uv}(m,n)$ is given by
\begin{align*}
\chi_{B^{uv}(m,n)}^{b}(\lambda)  & =(\lambda - 1) \gamma_{m-1} \chi_{B^{uv}(m,n-1)}^{b}(\lambda) + (-1)^{m-2} \lambda(\lambda - 2) \gamma_{m}^{n-1}\\
 & = \lambda(\lambda - 1)^{n-1} \gamma_{m-1}^{n-1} \gamma_{m+1} + (-1)^{m-2} \lambda (\lambda - 2) \gamma_m \frac{(\lambda - 1)^{n-1} \gamma_{m-1}^{n-1} - \gamma_{m}^{n-1} }{(\lambda-1) \gamma_{m-1} - \gamma_{m}}.
\end{align*}
\end{theorem}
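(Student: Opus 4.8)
The plan is to follow the proof of Theorem~\ref{B_{1}*} line for line, replacing every chromatic ingredient by its zero-free analogue. The base case is $n=1$: since $B^{uv}(m,1)$ is the single unbalanced cycle $C_{m}^{-}$, Theorem~\ref{C_n} supplies $\chi_{B^{uv}(m,1)}^{b}(\lambda)=\lambda\gamma_{m+1}$, which will anchor the recursion.

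For $n\geq 2$ I would dismantle the $n$-th page exactly as in the chromatic case. Applying the zero-free deletion-contraction formula of Theorem~\ref{E-C} to the positive edge $e_{1}=uu_{1}^{n}$, the deleted graph is $B^{uv}(m,n-1)$ with a path $P_{m-1}$ attached at $v$, whose zero-free polynomial is $(\lambda-1)^{m-2}\chi_{B^{uv}(m,n-1)}^{b}(\lambda)$ by Lemma~\ref{Path_add1}, while the contracted graph is $B^{uv}(m,n)$ with the $n$-th page shortened by one. Iterating deletion-contraction along $e_{2}=uu_{2}^{n},e_{3}=uu_{3}^{n},\ldots$ shortens that page one step at a time and peels off, at each stage, a path-augmented copy of $B^{uv}(m,n-1)$ carrying a decreasing power of $(\lambda-1)$ and an alternating sign. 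Collecting the terms yields
\begin{equation*}
\chi_{B^{uv}(m,n)}^{b}(\lambda)=\left[\sum_{i=0}^{m-3}(-1)^{i}(\lambda-1)^{(m-2)-i}\right]\chi_{B^{uv}(m,n-1)}^{b}(\lambda)+(-1)^{m-2}\chi_{B_{m}^{n-1}}^{b}(\lambda),
\end{equation*}
and since the bracketed sum equals $(\lambda-1)\gamma_{m-1}$ and $\chi_{B_{m}^{n-1}}^{b}(\lambda)=\lambda(\lambda-2)\gamma_{m}^{n-1}$ by Proposition~\ref{prop_1-1}, this is precisely the first displayed identity of the theorem.

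It then remains to solve the affine first-order recursion $a_{n}=(\lambda-1)\gamma_{m-1}\,a_{n-1}+(-1)^{m-2}\lambda(\lambda-2)\gamma_{m}^{n-1}$ with $a_{1}=\lambda\gamma_{m+1}$. Unrolling it down to the base case produces the homogeneous part $\bigl((\lambda-1)\gamma_{m-1}\bigr)^{n-1}\lambda\gamma_{m+1}$ together with $(-1)^{m-2}\lambda(\lambda-2)\sum_{i=0}^{n-2}\bigl((\lambda-1)\gamma_{m-1}\bigr)^{i}\gamma_{m}^{n-1-i}$; summing the latter as a geometric series of ratio $(\lambda-1)\gamma_{m-1}/\gamma_{m}$ and simplifying gives the closed form in the second displayed identity.

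The geometric summation is routine. The only point that needs genuine attention, rather than a mechanical copy of the chromatic argument, is the identification of the terminal graph of the contraction sequence: the fully contracted $n$-th spine leaves a positive $uv$ edge which, together with the original negative $uv$ edge, forms an unbalanced digon, so the remaining graph is exactly $B_{m}^{n-1}$, and its zero-free polynomial is $\lambda(\lambda-2)\gamma_{m}^{n-1}$ rather than the $(\lambda-1)^{2}\gamma_{m}^{n-1}$ of the chromatic case. This substitution, together with the altered base value $\lambda\gamma_{m+1}$, is precisely what deforms the closed form of Theorem~\ref{B_{1}*} into the present one.
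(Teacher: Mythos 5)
Your proposal is correct and takes essentially the same route as the paper, whose proof of this theorem is literally the remark that it is ``similar to that of Theorem~\ref{B_{1}*}''; you have simply carried out that mirroring in full, with the two genuine substitutions (terminal graph $B_{m}^{n-1}$ contributing $\lambda(\lambda-2)\gamma_{m}^{n-1}$ via Proposition~\ref{prop_1-1}, and base value $\chi^{b}_{B^{uv}(m,1)}(\lambda)=\lambda\gamma_{m+1}$ via Theorem~\ref{C_n}) identified exactly where they matter. The deletion-contraction recursion, the identification $\sum_{i=0}^{m-3}(-1)^{i}(\lambda-1)^{(m-2)-i}=(\lambda-1)\gamma_{m-1}$, and the geometric summation all check out against the stated closed form.
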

\begin{proof}
Proof of this theorem is similar to that of Theorem~\ref{B_{1}*}
\end{proof}

\begin{theorem} \label{B_l-1}
For $n \geq 2$ and $2 \leq l \leq n-1$, the zero-free chromatic polynomial of $B_{l}(m,n)$ is given by  
\[ \chi_{B_{l}(m,n)}^{b}(\lambda) = \gamma_m^{n-l} \chi_{B^{uv}(m,l)}^{b}(\lambda). \]
\end{theorem}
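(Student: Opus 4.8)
The plan is to mirror the proof of Theorem~\ref{B_l} almost verbatim, replacing the ordinary chromatic polynomial by the zero-free chromatic polynomial throughout and invoking the zero-free versions of the supporting results. The two facts that drive the chromatic-polynomial argument both have exact zero-free analogues: Theorem~\ref{E-C} already supplies $\chi^{b}_{\Sigma}(\lambda) = \chi^{b}_{\Sigma \setminus e}(\lambda) - \chi^{b}_{\Sigma /e}(\lambda)$ for a positive edge $e$, and Lemma~\ref{Path_add1} is the zero-free counterpart of Lemma~\ref{Path_add}. Hence the entire structural skeleton of the proof carries over with no change of shape.

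First I would establish the one-step recursion $\chi^{b}_{B_{l}(m,n)}(\lambda) = \gamma_{m}\,\chi^{b}_{B_{l}(m,n-1)}(\lambda)$ for $n > l$. Because $l \le n-1$, the last page $u u_{1}^{n} u_{2}^{n} \cdots u_{m-2}^{n} v u$ is a \emph{positive} $m$-cycle, so every edge $uu_{j}^{n}$ is positive and eligible for Theorem~\ref{E-C}. Applying the zero-free deletion-contraction formula successively to $uu_{1}^{n}, uu_{2}^{n}, \ldots$ peels off this last page: each deletion leaves a path dangling at $v$ whose contribution is computed by Lemma~\ref{Path_add1}, while each contraction shortens the page by one vertex, and the resulting alternating sum telescopes to the coefficient $\sum_{i=0}^{m-2}(-1)^{i}(\lambda-1)^{(m-2)-i} = \gamma_{m}$ multiplying $\chi^{b}_{B_{l}(m,n-1)}(\lambda)$. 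Crucially, the negative edges $uu_{1}^{1},\ldots,uu_{1}^{l}$ of the first $l$ pages are never disturbed, so the reduced graph really is $B_{l}(m,n-1)$.

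Next I would iterate this recursion. Starting from $B_{l}(m,n)$ and applying the displayed identity $n-l-1$ times drives $n$ down to $l$, producing $\chi^{b}_{B_{l}(m,n)}(\lambda) = \gamma_{m}^{\,n-l}\,\chi^{b}_{B_{l}(m,l)}(\lambda)$. Finally, $B_{l}(m,l)$ has all $l$ of its pages carrying a negative edge at $u$, so it is switching equivalent to $B^{uv}(m,l)$, exactly as observed in the proof of Theorem~\ref{B_l}; since the zero-free chromatic polynomial is invariant under switching, $\chi^{b}_{B_{l}(m,l)}(\lambda) = \chi^{b}_{B^{uv}(m,l)}(\lambda)$, and substituting this yields the claimed formula $\chi^{b}_{B_{l}(m,n)}(\lambda) = \gamma_{m}^{\,n-l}\,\chi^{b}_{B^{uv}(m,l)}(\lambda)$.

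I expect the only delicate point to be the bookkeeping in the first step: verifying that the repeated deletion-contraction on the positive last page genuinely telescopes into the single factor $\gamma_{m}$, and that the graph remaining after peeling is $B_{l}(m,n-1)$ rather than a graph with an altered signature. This is identical to the computation already carried out for $\chi_{B_{l}(m,n)}$ in Theorem~\ref{B_l} (and for $\chi_{B_{m}^{n}}$ in Proposition~\ref{prop_1}), so no new obstacle arises; everything else is a direct transcription into the zero-free setting.
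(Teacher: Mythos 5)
Your proposal is correct and is exactly the argument the paper intends: the paper omits this proof, stating only that it is ``similar to the corresponding proof presented for chromatic polynomials'' (Theorem~\ref{B_l}), and you have supplied precisely that transcription, using the zero-free deletion-contraction of Theorem~\ref{E-C}, Lemma~\ref{Path_add1} in place of Lemma~\ref{Path_add}, and the switching equivalence $B_{l}(m,l)\sim B^{uv}(m,l)$ together with switching invariance of $\chi^{b}$. No gaps; your attention to the fact that the peeled page is positive (so deletion-contraction applies) and that the signature on the remaining pages is undisturbed covers the only points that needed checking.
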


\section{Conclusion and future directions}\label{conclusion}

In this paper, we have determined recursive formulas for the chromatic polynomials and the zero-free chromatic polynomials of switching non-isomorphic signed book graphs. 

In ordinary graph theory, it is well known that the chromatic polynomial of union $G  \cup H$ of two graphs $G$ and $H$, where $G \cap H$ is a complete graph, is given by $$\chi_{G  \cup H}(k)  = \frac{\chi_{G}(k) \cdot \chi_{H}(k)}{\chi_{G  \cap H}(k)}.$$  

In the context of signed graph theory, we pose the following problem.

\begin{prob} \label{Problem-1}
\rm{Let $\Sigma_1$ and $\Sigma_2$ be two signed graphs such that $\Sigma_1 \cap \Sigma_2$ is a signed complete graph and that $\chi_{\Sigma_1}(\lambda),\chi_{\Sigma_2}(\lambda)~ \text{and}~ \chi_{\Sigma_1 \cap \Sigma_2}(\lambda)$ be known. What is the formula for $\chi_{\Sigma_1 \cup \Sigma_2}(\lambda)$?}  
\end{prob}

We pose the same problem for the zero-free chromatic polynomial of the union of two signed graphs.

\begin{prob} \label{Problem-2}
\rm{Let $\Sigma_1$ and $\Sigma_2$ be two signed graphs such that $\Sigma_1 \cap \Sigma_2$ is a signed complete graph and that $\chi^{b}_{\Sigma_1}(\lambda),\chi^{b}_{\Sigma_2}(\lambda)~ \text{and}~ \chi^{b}_{\Sigma_1 \cap \Sigma_2}(\lambda)$ be known. What is the formula for $\chi^{b}_{\Sigma_1 \cup \Sigma_2}(\lambda)$?}  
\end{prob}

In Problem~\ref{Problem-1} and Problem~\ref{Problem-2}, it is implicitly assumed that the signatures of $\Sigma_1$ and $\Sigma_2$ agree on their intersection part so that $\Sigma_1 \cup \Sigma_2$ is well defined.

Note that a signed book graph $(B(m,2),\sigma)$ is the union of two $m\text{-cycles}$ having exactly one edge in common, namely $uv$. The edge $uv$, being the intersection of two signed cycles, is indeed a signed complete graph on two vertices. Thus if we find answers of Problem~\ref{Problem-1} and Problem~\ref{Problem-2}, then the calculation of $\chi_{(B(m,2),\sigma)}(\lambda)$ and $\chi^{b}_{(B(m,2),\sigma)}(\lambda)$ are straightforward.  

Similarly, if the answers of Problem~\ref{Problem-1} and Problem~\ref{Problem-2} are known then for $n \geq 3$, both kind of chromatic polynomials of $(B(m,n),\sigma)$ can be obtained easily,  as $(B(m,n),\sigma)$ is the union of a signed $B(m,n-1)$ and a signed $m\text{-cycle}$, where their intersection is a signed complete graph on two vertices.

\noindent\textbf{Acknowledgements}

We sincerely thank the anonymous reviewers for carefully reading our manuscript and providing insightful comments and suggestions that helped us in substantial changes and improvements of the presentation of the article.

\end{document}